\documentclass[11pt]{article}
\usepackage{hyperref}

\usepackage[english]{babel}
\usepackage[left=0.5in,right=0.5in,top=1in,bottom=1.5in]{geometry} 
\usepackage{amssymb,amsmath,amsthm,amscd,ifthen}
\usepackage{mathtools}
\mathtoolsset{showonlyrefs}
\usepackage{graphicx}
\usepackage{xcolor}
\definecolor{ultramarine}{rgb}{0.07, 0.04, 0.56}
\definecolor{darkspringgreen}{rgb}{0.09, 0.45, 0.27}
\hypersetup{colorlinks,linkcolor={darkspringgreen},citecolor={ultramarine},urlcolor={ultramarine}}
\usepackage{bbm}
\usepackage{dsfont}
\usepackage{comment}
\usepackage{framed}
\usepackage{transparent}
\usepackage{color}
\usepackage{dirtytalk}
\usepackage{authblk}
\usepackage{framed}
\usepackage[authoryear]{natbib}
\bibpunct{(}{)}{;}{a}{,}{,}
\bibliographystyle{dcu}

\makeatletter
\def\BState{\State\hskip-\ALG@thistlm}
\makeatother

\newcommand{\R}{{\mathbb R}}
\def\P{\mathbb{P}}

\newcommand{\FF}{{\mathcal F}}
\newcommand{\eps}{\epsilon}
\DeclareMathOperator{\E}{\mathbb{E}}

\DeclareMathOperator{\supp}{supp}

\DeclareMathOperator{\Var}{Var} 
\DeclareMathOperator{\Ind}{\mathbbm{1}} 
\newcommand*{\F}{\mathcal{F}}

\DeclareMathOperator*{\argmin}{arg\,min}

\newcommand*{\A}{\mathcal{A}}

\newcommand{\T}{{\top}}

\newcommand{\cv}{\mathbf{c}}

\definecolor{cd60952}{RGB}{214,9,82}

\date{}

\newtheorem{Theorem}{Theorem}[section]
\newtheorem{Lemma}[Theorem]{Lemma}

\newtheorem{Corollary}[Theorem]{Corollary}
\newtheorem{Remark}[Theorem]{Remark}
\newtheorem{Example}[Theorem]{Example}


\DeclareMathOperator{\Tr}{Tr}

\DeclarePairedDelimiter{\norm}{\lVert}{\rVert}
\DeclarePairedDelimiter{\abs}{\lvert}{\rvert}
\DeclareMathOperator{\diam}{diam}

\DeclareMathOperator{\MOM}{MOM}
\DeclareMathOperator{\QOM}{QOM}
\newcommand*{\pmin}{p_{\min}}

\title{Robust $k$-means Clustering for Distributions with Two Moments}
\author[1]{Yegor Klochkov}
\author[2]{Alexey Kroshnin}
\author[3]{Nikita Zhivotovskiy}
\affil[1]{Cambridge-INET, Faculty of Economics, University of Cambridge, \href{mailto:yk376@cam.ac.uk}{yk376@cam.ac.uk}}
\affil[2]{HSE University and Institute for Information Transmission Problems, RAS \href{mailto:akroshnin@hse.ru}{akroshnin@hse.ru}}
\affil[3]{Google Research, Brain Team, \href{mailto:zhivotovskiy@google.com}{zhivotovskiy@google.com}}

\setcounter{Maxaffil}{0}

\begin{document}

\maketitle

\abstract{
We consider the robust algorithms for the $k$-means clustering problem where a quantizer is constructed based on $N$ independent observations. Our main results are median of means based non-asymptotic excess distortion bounds {that} hold under the two bounded moments assumption in a general separable Hilbert space. In particular, our results extend the renowned asymptotic result of \cite{pollard1981strong} who showed that the existence of two moments is sufficient for strong consistency of an empirically optimal quantizer in $\R^d$. In a special case of clustering in $\R^d$, under two bounded moments, we prove matching (up to constant factors) non-asymptotic upper and lower bounds on the excess distortion, which depend on the probability mass of the lightest cluster of an optimal quantizer. Our bounds have the sub-Gaussian form, and the proofs are based on the versions of uniform bounds for robust mean estimators.
}

\section{Introduction}

Statistical (sample-based) $k$-means clustering is the classical form of quantization for probability measures. In this framework, given a distribution $P$ defined on a normed space $(E, \norm{\cdot})$ and an integer $k \ge 1$, one wants to find $A^* \subset E$ such that the \emph{distortion} 
\[
D(A) = \E\min\limits_{a \in A} \norm{X - a}^2 \quad\text{is minimized among all}\quad A \subset E,\; |A| = k.
\]
It is well known that if $(E, \norm{\cdot})$ is $\R^{d}$ with the Euclidean norm and if $\E \norm{X}^2 < \infty$ then this \emph{optimal quantizer} $A^*$ exists (see e.g., Theorem~1 in \citep{linder2002learning}), although it is not necessarily unique for $k \ge 2$. The value of the optimal distortion can be written as $D(A^*)$. In the statistical setup, the access to $P$ is achieved via $N$ independent observations $X_1, \ldots, X_N$ sampled according to $P$. Consider again the case of $\R^d$ and the Euclidean norm. The following renowned result due to \cite{pollard1981strong} states strong consistency of (any) \emph{empirically optimal quantizer}, which is defined by
\begin{equation}
\label{empquantizer}
    \hat{A} \in \argmin\limits_{A \subset \R^d, |A| = k} \frac{1}{N} \sum\limits_{i = 1}^N \min\limits_{a \in A} \norm{X_i - a}^2.
\end{equation}

\begin{Theorem}[Strong consistency of $k$-means \citep{pollard1981strong}]\label{thm_pollard}
    For any distribution $P$ such that $\E \norm{X}^2 < \infty$ and any integer $k \ge 1$, it holds that
    \[
    D(\hat{A}) - D(A^*) \overset{a.s.}{\to} 0,\quad \text{as}\quad N \to \infty.
    \]
\end{Theorem}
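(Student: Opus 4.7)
The plan is to combine a compactness reduction with a truncated uniform law of large numbers. Write $f_A(x) = \min_{a \in A} \norm{x - a}^2$ for the pointwise distortion, so that $D(A) = \E f_A(X)$ and the empirical distortion is $D_N(A) = \frac{1}{N} \sum_{i=1}^N f_A(X_i)$. By empirical optimality of $\hat A$ and optimality of $A^*$,
\[
    0 \le D(\hat A) - D(A^*) \le \bigl[D(\hat A) - D_N(\hat A)\bigr] + \bigl[D_N(A^*) - D(A^*)\bigr],
\]
so it suffices to establish a uniform SLLN $\sup_A \abs{D_N(A) - D(A)} \overset{a.s.}{\to} 0$ over a class large enough to contain $\hat A$ and $A^*$ almost surely for large $N$. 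The two obstacles are that the parameter space of $k$-tuples is unbounded and the loss $f_A$ itself is unbounded.

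I would first reduce to a bounded parameter space. The trivial one-point quantizer $\{0\}$ has distortion $\E\norm{X}^2 < \infty$, hence $D(A^*) \le \E\norm{X}^2$, and likewise $D_N(\hat A) \le D_N(\{0\}) \to \E\norm{X}^2$ a.s. Using this, one shows that there is $R = R(P,k)$ such that every optimal $A^*$ lies in $B(0,R)$ and, with probability one for $N$ large, every empirical minimizer $\hat A$ also lies in $B(0,R)$. The underlying idea is a case analysis: a center $a$ with $\norm{a}$ much larger than the bulk radius of $P$ either has a Voronoi cell of tiny mass---so replacing $a$ by a fixed reference point in the support changes the distortion negligibly---or else most centers are far from the bulk, forcing $D(A) \gg \E\norm{X}^2$. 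Either case contradicts near-optimality.

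Next, I would establish a uniform SLLN on $\Theta_R = \{A \subset B(0,R) : |A| = k\}$ via truncation. For $M > 0$ set $f_A^M(x) = f_A(x) \wedge M$. The truncated class $\{f_A^M : A \in \Theta_R\}$ is bounded by $M$, and $A \mapsto f_A^M(x)$ is uniformly Lipschitz in the Hausdorff metric on $\Theta_R$ with a constant depending only on $M$ and $R$, so the class has finite covering numbers and classical Glivenko--Cantelli yields
\[
    \sup_{A \in \Theta_R} \abs*{\tfrac{1}{N} \sum_{i=1}^N f_A^M(X_i) - \E f_A^M(X)} \overset{a.s.}{\to} 0 \qquad (N \to \infty).
\]
The truncation gap is handled uniformly via the integrable majorant $g_M(x) = (\norm{x} + R)^2 \ind\{(\norm{x}+R)^2 > M\}$, which satisfies $g_M(x) \ge \sup_{A \in \Theta_R}\bigl(f_A(x) - f_A^M(x)\bigr)$: we have $\E g_M(X) \to 0$ as $M \to \infty$, and by the ordinary SLLN applied to $g_M$ the same holds almost surely for the empirical average. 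Taking $N \to \infty$ and then $M \to \infty$ delivers the uniform SLLN on $\Theta_R$, which together with the compactification step completes the proof.

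The main obstacle I expect is the compactification step: ruling out that an empirical minimizer sends a center off to infinity requires combining the SLLN for a few well-chosen reference quantizers with a careful case split on whether far-away centers carry nonnegligible mass. The remaining ingredients---Lipschitz parametrization, truncation, and the one-dimensional SLLN for the tail majorant---are standard once this step is in place.
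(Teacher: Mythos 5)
The paper does not reprove this theorem; it cites \cite{pollard1981strong}, and your two-stage plan (compactify the parameter space of $k$-tuples, then prove a truncated uniform SLLN over the resulting compact class with integrable envelope) is exactly the structure of Pollard's original argument. The compactification step you flag as the main obstacle is indeed where Pollard spends most of his effort: he carries it out by induction on the number of centers $j \le k$, exploiting that the optimal $j$-point distortion is strictly decreasing in $j$ as long as $\supp(P)$ has at least $k$ points (the quantity the present paper calls $\Delta$ in Section~\ref{sec:Generalcase}), so that deleting a far-away center from a near-optimal $j$-quantizer would contradict optimality at level $j-1$; your informal case split would need to be organized along these inductive lines to close cleanly, but the overall strategy matches.
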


This consistency result was extended to the case where the space $(E, \norm{\cdot})$ is a general separable Hilbert space \citep*{biau2008performance, levrard2015nonasymptotic}. Clearly, the consistency alone does not provide any information on how many training samples are needed
to ensure that the excess distortion is below a certain level. Moreover, it does not allow the underlying distribution to be different for each sample size $N$.
Over the last three decades a lot of efforts have been made in order to prove non-asymptotic results for the \emph{excess distortion} $D(\hat{A}) - D(A^*)$ where the space is $\R^d$ or a general separable Hilbert space. We refer to various bounds in \citep*{bartlett1998minimax, linder2002learning, biau2008performance, graf2007foundations, maurer2010k, narayanan2010sample,  levrard2013fast, levrard2015nonasymptotic, fefferman2016testing, maurer2016vector} 
and the references therein. However, almost all the results were provided under the strong assumption that the domain is bounded. That is, it is usually assumed that $\norm{X} \le T$ almost surely where $X$ is distributed according to $P$ and $T > 0$ is a constant. This simple setup allows one to use the tools of Empirical Process Theory in order to prove results of the form (see, e.g., Theorem~2.1 by \cite*{biau2008performance}, where the space $(E, d)$ is assumed to be a separable Hilbert space)
\begin{equation}
\label{boundedcase}
    D(\hat{A}) - D(A^*) \lesssim T^2 \sqrt{\frac{k^2 + \log \frac{1}{\delta}}{N}},
\end{equation}
holding, with probability at least $1 - \delta$, for $\delta \in (0,1)$, where the notation $\lesssim$ suppresses absolute multiplicative constants. The question of general unbounded distributions is more challenging and has been studied less. The case where the vectors $X_i$ have well behaved exponential moments was analyzed in \citep{cadre2012holder}. Some results under less restrictive assumptions include: the uniform deviation bounds in \citep*{telgarsky2013moment, bachem2017uniform}; a \emph{sub-Gaussian} distortion bound in \citep*{brownlees2015empirical} for the so-called $k$-medians problem; and the results in \citep*{brecheteau2018robust} for trimmed quantizers. We discuss some of these results in more detail in what follows. However, we emphasize that in our particular setup the results we are aware of require the existence of at least \emph{four moments} (that is, $\E \norm{X}^4 < \infty$) compared to the minimal assumption under which the problem makes sense~--- $\E \norm{X}^2 < \infty$~--- which we are aiming for in this paper (this assumption is required to define the distortion $D(A^*)$). The question whether non-asymptotic results of the form \eqref{boundedcase} are possible under the minimal assumption $\E \norm{X}^2 < \infty$ (as in \citep{pollard1981strong}) appeared naturally in several papers (see, e.g., \citep{levrard2013fast}) but has not yet been addressed.

Our motivating example is the sub-Gaussian mean estimator introduced in \citep{lugosi2019sub}. Consider the situation where $E = \R^d$ with the Euclidean norm, set $\mu = \E X$, and assume that the covariance matrix $\Sigma = \E(X - \mu)(X - \mu)^\T$ exists. If $k = 1$, we obviously have that the optimal quantizer $A^*$ is actually the mean $\mu$. 
In this case, our problem boils down to the estimation of the mean of a random vector. It was shown by \citeauthor{lugosi2019sub} that there is an estimator (denoted by $\hat{a}$) such that, with probability at least $1 - \delta$,
\begin{equation}
\label{lugosimendelson}
    \E \norm{X - \hat{a}}^2 - \E \norm{X - \mu}^2 
    \lesssim \frac{\E \norm{X - \mu}^2 + \lambda_{\max}(\Sigma) \log \frac{1}{\delta} }{N},
\end{equation}
where $\lambda_{\max}(\Sigma)$ is the largest eigenvalue of the covariance matrix $\Sigma$, the expectation is taken only with respect to $X$, and $\hat{a} = \hat{a}(X_1, \ldots, X_N)$ is random. 
It is known that this bound is valid for the sample mean in the case where the underlying distribution is multivariate Gaussian. The bound \eqref{lugosimendelson} has some remarkable properties:
\begin{itemize}
    \item The dependence on $N$ is $O\left(\frac{1}{N}\right)$.
    
    \item It only requires the existence of two moments, that is $\E \norm{X}^2 < \infty$. We note that $\lambda_{\max}(\Sigma) \le \Tr(\Sigma) = \E \norm{X - \mu}^2$.
    
    \item It has the logarithmic dependence on the confidence, which is $\log \frac{1}{\delta}$ and corresponds to the situation where $\norm*{\mu - \hat{a}}$ has \emph{sub-Gaussian tails} (see, e.g., \citep{Vershynin2016HDP} for various equivalent definitions of sub-Gaussian distributions).
    
    \item Finally, even in the favorable bounded case where $\norm{X - \mu} \le T$ almost surely, the bound \eqref{lugosimendelson} does not scale as $T^2$ (compare it with the typical $k$-means bound \eqref{boundedcase}) but as $\E \norm{X - \mu}^2$ which can be much smaller than $T^2$.  
\end{itemize}

Therefore, extending the original question of whether the non-asymptotic excess distortion bounds are possible under $\E \norm{X}^2 < \infty$, it is natural to ask if one can prove a result of the form \eqref{lugosimendelson} for $k \ge 2$. Unfortunately, a fully general picture is much more subtle. In particular, even in the favorable bounded case for $k \ge 2$, lower bounds of order $\Omega\left(\frac{1}{\sqrt{N}}\right)$ are known (see, e.g., \citep{antos2005improved}) making the simple bound \eqref{boundedcase} sharp with respect to $N$.

Further, if $k = 1$ we observe that the right-hand side of \eqref{lugosimendelson} converges to zero as $N$ goes to infinity even if the underlying distribution $P$ depends on the sample size $N$ (see e.g., Example \ref{pminexample} below). Our only condition is that $\E \norm{X - \mu}^2 = \Tr(\Sigma)$ does not grow too fast as $N$ goes to infinity. Example~\ref{pminexample} below shows that the same is not true for general $k \ge 2$. 

Risk bounds having the sub-Gaussian form for heavy-tailed distributions have attracted a lot of attention recently. Among these advances are (almost) optimal results on mean estimation in various norms \citep{minsker2018uniform, lugosi2019near} and in classification \citep*{lecue2018robust}, covariance estimation \citep{mendelson2018robust} and robust regression \citep{minsker2019excess, lugosi2019regularization, lecue2017robust}. All the technical results in this area are based on different versions of the so-called \emph{median of means estimator}, which was first introduced and analyzed by \cite{nemirovsky1983problem} and independently in \citep*{alon1999space}. For the sake of completeness, let us recall this basic result.

Assume that $Y_1, \ldots, Y_N$ are independent random variables with the same mean $\mu$ and the same variance $\sigma^2$. Fix the confidence level $\delta$ and assume that $\ell = \left\lceil 8 \log \frac{1}{\delta}\right\rceil$ is such that $N = m \ell$, where $m$ is integer. Split the set $\{1, \ldots, N\}$ into $\ell$ blocks $I_1, \ldots, I_{\ell}$ of equal size such that $I_j = \{1 + m(j - 1), \ldots, mj\}$. Denote the \emph{median of means} (MOM for short) estimator by
\[
\hat{\mu} = \mathrm{Median}\left(\frac{\ell}{N}\sum\limits_{i \in I_1}Y_i, \ldots, \frac{\ell}{N} \sum\limits_{i \in I_\ell}Y_i\right).
\]
For this estimator we have the following sub-Gaussian behaviour. It holds, with probability at least $1 - \delta$, that
\[
\abs*{\hat{\mu} - \mu} \le \sigma\sqrt{\frac{32 \log \frac{1}{\delta}}{N}}.
\]

Returning to the question of $k$-means clustering and the inequalities of the form \eqref{lugosimendelson} for general $k \ge 2$, the following simple example inspired by \citeauthor*{bachem2017uniform} highlights some of the obstacles we have to handle.

\begin{Example}
\label{pminexample}
    Let $N$ be the sample size. Consider the real line $\R$, $k = 2$ and the distribution $P$ supported on $\{0, \sqrt{N}\}$ such that $P(\{0\}) = 1 - \frac{1}{N}$ and $P(\{\sqrt{N}\}) = \frac{1}{N}$. In this case we have $D^*(A) = 0$.
    
    One may easily see that, with constant probability, the value $\sqrt{N}$ is not among the observations $X_1, \ldots, X_N$. That obviously forces $\hat{A} = \{0\}$ and
    \[
    D(\hat{A}) - D(A^*) = \E X^2 = 1,
    \]
    which is not converging to zero as $N$ goes to infinity.
\end{Example}
In Section~\ref{sec:lowerbound} we significantly extend this construction. Of course, Example~\ref{pminexample} does not contradict the strong consistency result of Theorem~\ref{thm_pollard}. Although $\E X^2 = 1$, the distribution $P = P(N)$ changes with $N$, which is, of course, not allowed in Theorem~\ref{thm_pollard}. However, in the statistical learning theory literature the underlying distribution $P$ is usually allowed to be different for each value of $N$ as in Example \ref{pminexample}. This provides an additional motivation for our study. Our general bounds provide consistency even for some sequences of distributions changing with the sample size $N$.

\subsection*{On Voronoi cells and clustering}

From now on we assume that $(E, \norm{\cdot})$ is a separable Hilbert space with the inner product denoted by $\langle\cdot, \cdot \rangle$. 
Any quantizer $A = \{a_1, \ldots, a_k\}$ induces a partition of \(E\) into the so-called \emph{Voronoi cells}, which for \(a \in A\) consists of the points that have \(a\) as the closest point from \(A\). To avoid the uncertainty at the boundaries, we assume that the elements of each quantizer \( A = \{a_1, \dots, a_k \} \) are ordered, and define the cells for $j = 1, \ldots, k$,
\begin{align*}
    V_{j}(A) = \bigl\{x \in E \;:\; & \norm{x - a_j} < \norm{x - a_{j'}}, \; j' = 1, \dots, j - 1, \\
    & \norm{x - a_j} \le \norm{x - a_{j'}}, \; j' = j + 1, \dots, k \bigr\}.
\end{align*}
This way, we ensure that the cells are non-intersecting, and each of them is an intersection of \( k - 1\) open or closed half-spaces. 
Slightly abusing the notation, we sometimes write $V_{a_j}$ instead of $V_{j}$. 

We recall some properties of an \emph{optimal} quantizer under the assumption $\E \norm{X}^2 < \infty$. 
\begin{enumerate}
    \item\label{prop:existence} For any distribution $P$ with $\E \norm{X}^2 < \infty$ and any $k$ there exists an optimal $k$-elements quantizer $A^*$ (see \citep[Corollary~3.1]{fischer2010quantization}) Note that an optimal quantizer is not necessarily unique.

    \item\label{zero_intesec} For any optimal $A^*$ and $i \neq j$,
    \[
    P\left(\norm{X - a_j} = \norm{X - a_i}\right) = 0,
    \]
    which means that the measure of intersection of any two cells is zero, thus it does not matter to which cell the boundary points are assigned (see \citep{graf2007foundations}, Theorem~4.2.)
    
    \item The \emph{centroid condition} \citep{graf2007foundations}: for $j = 1, \ldots, k$,
    \begin{align}
        &\E \norm{X - a_j}^2 \Ind[X \in V_{j}] = \inf\limits_{a \in E} \E \norm{X - a}^2 \Ind[X \in V_{j}], \nonumber \\ 
        &a_j = \frac{\E X \Ind[X \in V_{j}]}{P(V_j)}, \;\; \text{whenever}\;\; P(V_j) > 0.
    \end{align}
    
    \item Once the support of $P$ consists of at least $k$ elements, there is a well-defined real number $M = M(P, k)$ such that for any optimal $A^*$,
    \begin{equation}
    \label{upperm}
        \norm{a} \le M \quad \text{for all}\quad a \in A^*.
    \end{equation} 
    We refer to the original proof of \citeauthor{pollard1981strong} or to Lemma~5.1 in \citep{fischer2010quantization}. 
    
    \item\label{prop:p_min} Due to Theorem~4.1 in \citep{graf2007foundations} provided that the support of $P$ consists of at least $k$ elements, there exists $\pmin > 0$ such that for any optimal $A^*$,
    \begin{equation}
    \label{pmin}
        \min\limits_{j}P(V_{j}(A^*)) \ge \pmin.
    \end{equation}
\end{enumerate}

Observe that the same conclusions work if we replace $P$ by its empirical counterpart $P_N$. 
In particular, a version of centroid condition is also valid for the empirically optimal quantizer defined by \eqref{empquantizer}. 
However, it is not true for a MOM based estimator in general.


\subsection*{Structure of the paper}

\begin{itemize}
    \item Section~\ref{sec:warmup} is devoted to a high probability excess distortion bound that holds in the case where a good guess on the localization radius of the optimal quantizer $A^*$ is available. The result generalizes naturally several known bounds for the empirically optimal quantizer in separable Hilbert spaces.
    
    \item Section~\ref{sec:pminbound} contains our main results. We show that there is a consistent median-of-means based estimator that gives the sub-Gaussian performance under our minimal moment assumption provided that a good guess on $\pmin$ is given and $\pmin N \to \infty$. We also prove a lower bound showing that our dependence on $\pmin$ and $N$ is sharp up to constant factors in the special case of $\R^d$.
    
    \item Finally, Section~\ref{sec:Generalcase} is devoted to the generalization of our main results. We show that it is possible to prove a slightly weaker bound using the procedure that does not require the knowledge of the parameters of $P$.
    
    \item Section~\ref{Discussions} is devoted to some final remarks.
\end{itemize}

\subsection*{Notation}

For $a, b \in \R$, we set $a \wedge b = \min\{a, b\}, a \lor b = \max\{a, b\}$ and for two real valued functions $f, g$, we write $f \lesssim g$ iff there is an absolute constant $c > 0$ such that $f \le c g$. 
We set $f \simeq g$ if $f \lesssim g$ and $g \lesssim f$. 
Given a probability measure $P$ let $P^{\otimes N}$ denote the measure which is $N$-times product of $P$. For the sake of simplicity, we always assume that $\log x$ is equal to $\log x\: \lor\: 1$. The indicator of an event $A$ is denoted by $\Ind[A]$. We also use the standard $O(\cdot)$, $\Omega(\cdot)$, $\Theta(\cdot)$ notation as well as $\mathrm{KL}(P, Q)$ and $\mathrm{TV}(P, Q)$ for Kullback--Leibler divergence and Total Variation distance between two measures $P$ and $Q$ (see, e.g., \citep{boucheron2013concentration}). The support of a measure $P$ is denoted by $\supp(P)$. For a normed space $(E,  \norm{\cdot})$ let $B_{R}$ denote the closed ball of radius $R$ centered at the origin. To avoid the measurability issues we use the standard convention for the supremum of stochastic processes (see Paragraph~2 in \citep{Talagrand2014}). Given the sample $X_{1}, \ldots, X_{N}$ sampled i.i.d.\ from $P$ and a function $f \colon E \to \R$ we denote $P_N f = P_N f(X) = \frac{1}{N} \sum\limits_{i = 1}^N f(X_i)$. In general, the symbol $P_N$ denotes the empirical measure.

We are interested in $L_2(P_N)$ space, and the corresponding covering number of a functional class $\mathcal{G}$ is denoted by $\mathcal{N}_{2}(\mathcal{G}, x, P_N)$, where $x$ is the corresponding radius (see e.g., \citep{Vershynin2016HDP} for more details on covering numbers).

\section{Simple Case: Known Magnitude of an Optimal Quantizer}
\label{sec:warmup}

In this section we provide our simplest result which can serve as a good illustration of the underlying techniques. In Sections~\ref{sec:pminbound} and~\ref{sec:Generalcase} we are focusing on sharpening our basic bound as well as weakening some of the assumptions. 

We first show a simple bound which holds in the situations where a good guess on $M$ is available (recall the property~\eqref{upperm}). The result of Theorem~\ref{thm:simpleupperm} below can be seen as a strengthening of Theorem~11 in \citep{brownlees2015empirical}.

\begin{Remark}
\label{importantremark}
    It is important to note that the boundedness of the vectors in the finite set $A^*$ has nothing in common with the boundedness of the observations $X_1, \ldots, X_N$. The latter can still be unbounded and the distribution $P$ can be heavy-tailed. 
\end{Remark}

We proceed with the main result of this section.
\begin{Theorem}
\label{thm:simpleupperm}
    Fix $\delta \in (0, 1)$.
    Let some $M$ satisfying \eqref{upperm} be known. There is an estimator $\hat{A}_{\delta, M}$ that depends on $M$ and $\delta$ such that, with probability at least $1 - \delta$,
    \[
    D(\hat{A}_{\delta, M}) - D(A^*) \lesssim M \left(M + \sqrt{\E \norm{X}^{2}}\right) \left(\frac{k}{\sqrt{N}} + \sqrt{\frac{\log \frac{1}{\delta}}{N}}\right).
    \]
\end{Theorem}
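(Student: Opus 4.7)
My plan is to construct $\hat A_{\delta, M}$ as a min-max median-of-means tournament over the compact set $\mathcal{A}_M := \{A \subset B_M : |A| = k\}$ of quantizers respecting the known magnitude bound, in the spirit of Lugosi--Mendelson and Lecué--Lerasle. Split $\{1,\dots,N\}$ into $\ell \asymp \log(1/\delta)$ equal-sized blocks and, writing $g_{A, A'}(x) = \min_{a \in A}\|x-a\|^2 - \min_{a' \in A'}\|x - a'\|^2$, let $T_{A, A'}$ denote the median of the $\ell$ block-means of $g_{A, A'}(X_i)$. Set
\[
\hat A_{\delta, M} \in \argmin_{A \in \mathcal{A}_M}\ \sup_{A' \in \mathcal{A}_M} T_{A, A'}.
\]
Since $A^* \in \mathcal{A}_M$ by \eqref{upperm}, it suffices to compare $\hat A_{\delta, M}$ against $A^*$ through this tournament.

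The first technical ingredient I need is a uniform variance bound for the excess losses. Using $\|x-a\|^2 - \|x-a'\|^2 = \langle 2x - a - a',\, a' - a\rangle$, the map $x \mapsto \min_{a \in A} \|x-a\|^2$ is $2M$-Lipschitz on $E$ whenever $A \subset B_M$; hence $g_{A, A'}$ is $4M$-Lipschitz with $|g_{A, A'}(x)| \le 4M(\|x\| + M)$, and therefore
\[
\sup_{A, A' \in \mathcal{A}_M} \Var\bigl(g_{A, A'}(X)\bigr) \lesssim M^2\bigl(M^2 + \E\|X\|^2\bigr) =: \sigma^2.
\]
This is exactly the variance scaling that appears, under a square root, in the target bound.

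The main step will be a uniform MOM deviation inequality of the form: with probability at least $1-\delta$,
\[
\sup_{A, A' \in \mathcal{A}_M}\bigl|T_{A, A'} - (D(A) - D(A'))\bigr| \lesssim \sigma\sqrt{\frac{\log(1/\delta)}{N}} + \mathcal{R}_N(\G),
\]
where $\G = \{g_{A, A'} : A, A' \in \mathcal{A}_M\}$ and $\mathcal{R}_N$ is Rademacher complexity. For the complexity I will use the reformulation $\min_{a \in A} \|x-a\|^2 = \|x\|^2 - \max_{a \in A}\bigl(2\langle x, a \rangle - \|a\|^2\bigr)$, which reduces $\G$ to differences of maxima of $k$ affine functions with coefficients in $B_M$; a union bound over the $k$ centers combined with $\E\,\|N^{-1}\sum_i \eps_i X_i\| \le \sqrt{\E\|X\|^2/N}$ yields $\mathcal{R}_N(\G) \lesssim k M(M + \sqrt{\E\|X\|^2})/\sqrt{N}$. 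Concluding is then standard: setting $E_N := \sigma\sqrt{\log(1/\delta)/N} + \mathcal{R}_N(\G)$ and using $D(A^*) \le D(A')$ gives $\sup_{A'} T_{A^*, A'} \le E_N$, hence by minimality $\sup_{A'} T_{\hat A, A'} \le E_N$; taking $A' = A^*$ inside the sup, one has $D(\hat A) - D(A^*) - E_N \le T_{\hat A, A^*} \le E_N$, which gives the theorem.

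The main obstacle I expect is the uniform MOM inequality above: the class $\G$ is genuinely \emph{unbounded} (the raw squared loss has polynomial tails in $\|X\|$), so Talagrand's bounded-differences inequality is unavailable, and one must run either a variance-aware MOM chaining argument or a Lugosi--Mendelson-style min-max tournament analysis in the Hilbert setting, taking care that the $\sigma$-dependent confidence term and the Rademacher complexity term decouple cleanly and that medianization correctly absorbs the heavy tails. Once this machinery is in place, the Lipschitz-variance computation, the Rademacher bound for maxima of affine functions, and the min-max comparison that concludes the proof are all routine.
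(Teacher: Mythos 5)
Your proposal is correct in substance but takes a genuinely different route from the paper. You build a min-max median-of-means \emph{tournament} over pairs $(A,A')$ via the medianized differences $T_{A,A'}$ of block-means of $g_{A,A'}$, whereas the paper sets $\hat A_{\delta,M} = \arg\min_{A \in \A^k_M} \MOM(l_A)$ for the centered loss $l_A(x) = \min_a(-2\langle x,a\rangle + \norm{a}^2)$ and then bounds the excess distortion by the one-liner $D(\hat A)-D(A^*) \le 2\sup_{A\in\A^k_M}|\E l_A - \MOM(l_A)|$. Because the median is not linear, your $T_{A,A'} = \MOM(l_A - l_{A'})$ is not $\MOM(l_A)-\MOM(l_{A'})$, so the two estimators are genuinely different objects, yet both yield the same bound up to constants. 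Both routes ultimately rest on the same ingredients: a uniform deviation inequality for (medians/quantiles of) means over a function class — this is exactly the paper's Lemma~\ref{lem:momuniform} — a variance bound of order $M^2(M^2 + \E\norm{X}^2)$, and a Rademacher complexity bound of order $kM(M+\sqrt{\E\norm{X}^2})/\sqrt{N}$. The paper's direct-minimizer comparison is a bit leaner since it works over a single class $\{l_A\}$ rather than the pairwise-difference class $\{g_{A,A'}\}$, but the latter is of the same structure and only worsens constants.

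One step in your sketch is not rigorous as stated: bounding $\mathcal{R}_N(\G)$ by ``a union bound over the $k$ centers.'' A maximum of $k$ affine functions does not reduce to a union bound at the level of Rademacher expectations. The paper instead applies Maurer's vector $\ell_2$-contraction inequality (Theorem~3 of \citep{maurer2016vector}) to the $1$-Lipschitz map $\phi(\cv)=\min_{j\le k}c_j$ acting on the $\R^k$-valued class in~\eqref{fax}, which cleanly reduces the Rademacher average to $k$ independent linear terms and yields~\eqref{lineark}. You should replace the union-bound step with such an $\ell_2$-contraction (or a Gaussian comparison) argument; once that is done, your tournament proof goes through.
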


Let us now define the estimator that we use in Theorem~\ref{thm:simpleupperm}. 
Notice that minimizing \(\E \min_{a \in A} \norm{a - X}^{2} \) with respect to $A$ is equivalent to minimizing \(\E l_{A}(X)\), where
\[
    l_{A}(x) = \min_{a \in A} -2 \langle x, a\rangle + \norm{a}^{2} \, .
\]
Fix $1 \leq \ell \leq N$ and assume without loss of generality that $m = N / \ell$ is integer. Split the set $\{1, \ldots, N\}$ into $\ell$ blocks $I_1, \ldots, I_{\ell}$ of equal size such that $I_j = \{1 + (j - 1) m, \ldots, j m\}$. For any real-valued function $f$ and random variables $X_1, \ldots, X_N$ define
\begin{equation}\label{def:mom}
    \MOM(f) = \mathrm{Median}\left(\frac{\ell}{N} \sum\limits_{i \in I_1} f(X_i), \ldots, \frac{\ell}{N} \sum\limits_{i \in I_\ell} f(X_i)\right).
\end{equation}

Slightly abusing the notation we also set
\begin{equation}\label{def:Ak}
    \A^k = \left\{A \subset E \;:\; |A| \le k\right\} \quad \text{and} \quad 
    \A^k_M = \left\{A \in \A^k \;:\; \max\limits_{a \in A} \norm{a} \le M\right\}.
\end{equation}

\begin{framed}
    \textbf{The estimator of Theorem~\ref{thm:simpleupperm}}. Define
    \[
        \hat{A}_{\delta, M} = \arg\min_{A \in \A_{M}^k} \MOM(l_{A}),
    \]
    with the number of blocks \(\ell = 8 \left\lceil \log \tfrac{2}{\delta}\right\rceil + 1\). If there are many minimizers, we may choose any of them.
\end{framed}

The proof of Theorem~\ref{thm:simpleupperm} relies on the uniform bound for the median of means estimator. However, instead of restricting our attention to the medians only, we consider the \emph{quantiles of means} (QOM). That is, for a given level \( \alpha \in (0, 1)\),
\[
    \QOM_{\alpha}(f) = \mathrm{Quant}_{\alpha}\left( \frac{\ell}{N} \sum\limits_{i \in I_1} f(X_i), \ldots, \frac{\ell}{N} \sum\limits_{i \in I_\ell} f(X_i) \right),
\]
where \( \mathrm{Quant}_{\alpha}(x_1, \dots, x_\ell) = x^{(\lceil \alpha \ell \rceil)} \), for \( x^{(1)}, \dots, x^{(\ell)} \) being a non-decreasing rearrangement of the original sequence. For the sake of simplicity, we always assume that \( \ell \alpha \) is non-integer, such that the quantile is uniquely defined, and, in particular 
\( \mathrm{Quant}_{\alpha}(x_1, \dots, x_\ell) = - \mathrm{Quant}_{1 - \alpha}(-x_1, \dots, -x_\ell)\). It is usually enough to assume that $\ell$ is not even which can be always achieved by adding at most one extra block. Obviously, $\QOM_{\frac{1}{2}}$ corresponds to the median of means.

\begin{Lemma}
\label{lem:momuniform}
    Fix $\alpha \in (0, 1)$ and consider a separable class $\F$ of square integrable real-valued functions. Suppose, we have $\ell$ blocks and \( \ell \alpha \) is a non-integer. It holds that, with probability at least \(1 - e^{-\alpha^{2} \ell / 2}\),
    \begin{equation}
    \label{absvalueequation}
        \sup\limits_{f \in \F} (\E f - \QOM_{\alpha}(f)) 
        \le \frac{16}{\alpha} \E \sup\limits_{f \in \F} \left(\frac{1}{N} \sum_{i = 1}^{N} \eps_i f(X_i)\right) + \sqrt{\frac{8}{\alpha} \sup\limits_{f \in \F} \Var(f(X))\ \frac{\ell}{N}},
    \end{equation}
    as well as, with probability at least \( 1 - e^{-(1-\alpha)^2 \ell / 2} \),
    \begin{align}
    \label{absvalueequation_second}
        &\sup\limits_{f \in \F} (\QOM_{\alpha}(f) - \E f) \nonumber
        \\
        &\quad\quad \le \frac{16}{1 - \alpha} \E \sup\limits_{f \in \F} \left(\frac{1}{N} \sum_{i = 1}^{N} \eps_i f(X_i)\right) + \sqrt{\frac{8}{1 - \alpha} \sup\limits_{f \in \F} \Var(f(X))\ \frac{\ell}{N}},
    \end{align}
    where \( \eps_{1}, \dots, \eps_{N} \) are i.i.d.\ Rademacher signs. 
\end{Lemma}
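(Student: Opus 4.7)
The plan is to reduce the one-sided deviation $\sup_{f \in \F}(\E f - \QOM_\alpha(f)) > t$ to a uniform control over $\F$ of the number of ``bad'' blocks, and then to bound this count by combining a Lipschitz smoothing of the indicator, symmetrization, and block-level bounded-differences concentration. The second inequality \eqref{absvalueequation_second} will follow from the first by applying it to the class $-\F := \{-f : f \in \F\}$ together with the identity $\QOM_\alpha(f) = -\QOM_{1-\alpha}(-f)$.

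Fix $t > 0$ and pick a $(2/t)$-Lipschitz function $\phi_t : \R \to [0,1]$ with $\phi_t(0) = 0$, $\phi_t(x) = 1$ for $x \le -t$, and $\phi_t(x) = 0$ for $x \ge -t/2$. Write $\bar f_j := \frac{\ell}{N} \sum_{i \in I_j} f(X_i)$ and $T(f) := \sum_{j=1}^\ell \phi_t(\bar f_j - \E f)$. Since $\QOM_\alpha(f) < \E f - t$ forces at least $\lceil \alpha \ell \rceil$ blocks to satisfy $\bar f_j \le \E f - t$, and hence $T(f) \ge \lceil \alpha \ell \rceil$, it suffices to show that, with probability at least $1 - e^{-\alpha^2 \ell / 2}$, one has $\sup_f T(f) < \lceil \alpha \ell \rceil$ for $t$ equal to the right-hand side of \eqref{absvalueequation}.

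I bound $\E \sup_f T(f)$ by splitting it into $\sup_f \E T(f)$ and the fluctuation $\E \sup_f (T(f) - \E T(f))$. Chebyshev's inequality yields $\E T(f) \le \ell\, \P(\bar f_j - \E f \le -t/2) \le 4 \sigma_\F^2 \ell^2 / (N t^2)$ with $\sigma_\F^2 := \sup_f \Var(f(X))$, so the choice $t \gtrsim \sqrt{\sigma_\F^2 \ell / (\alpha N)}$ keeps the first term below $\alpha \ell / 4$. The fluctuation is handled by block-level Rademacher symmetrization followed by Ledoux--Talagrand contraction (valid since $\phi_t$ is $(2/t)$-Lipschitz with $\phi_t(0) = 0$), giving $\E \sup_f (T - \E T) \lesssim (1/t)\, \E \sup_f \bigl|\sum_{j=1}^\ell \epsilon_j (\bar f_j - \E f)\bigr|$. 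To pass from this block-level Rademacher average to the individual-level complexity $R_N(\F) := \E \sup_f \frac{1}{N} \sum_{i=1}^N \epsilon_i f(X_i)$ appearing in the statement, I will use a ghost-sample symmetrization together with the key observation that the products $\epsilon_{j(i)} \epsilon'_i$ of block-level and sample-level Rademacher signs are jointly i.i.d.\ Rademacher; combined with the invariance of signed Rademacher averages under centering of the class, this yields $\E \sup_f \bigl|\sum_j \epsilon_j (\bar f_j - \E f)\bigr| \lesssim \ell \, R_N(\F)$, so the fluctuation is at most $\alpha \ell / 4$ once $t \gtrsim R_N(\F)/\alpha$.

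Finally, $\sup_f T(f)$ concentrates around its mean via McDiarmid at the block level: changing one block changes $\sup_f T(f)$ by at most $1$ (since $\phi_t \in [0,1]$), whence $\P\bigl(\sup_f T(f) > \E \sup_f T(f) + \alpha \ell / 2\bigr) \le e^{-\alpha^2 \ell / 2}$. Assembling the estimates yields $\sup_f T(f) \le \alpha \ell < \lceil \alpha \ell \rceil$ on an event of probability at least $1 - e^{-\alpha^2 \ell / 2}$, which is the required bound. The main technical obstacle is the block-to-individual Rademacher conversion: block-level Rademacher averages can in general differ substantially from individual ones, and the reduction crucially relies on exploiting the symmetry of both layers of signs and on centering the class so that the drift term $\E f \sum_j \epsilon_j$ does not contaminate the bound; tracking constants through Chebyshev, symmetrization/contraction, and McDiarmid is what produces the specific numerical form $\frac{16}{\alpha} R_N(\F) + \sqrt{\tfrac{8 \sigma_\F^2 \ell}{\alpha N}}$.
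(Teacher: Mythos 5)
Your proof matches the paper's argument step for step: reduce the quantile deviation to a block-count exceedance, smooth the indicator with a Lipschitz surrogate vanishing at $0$, bound the expected count via Chebyshev, bound the fluctuation via symmetrization plus Ledoux--Talagrand contraction plus a block-to-sample Rademacher conversion, and close with bounded differences (McDiarmid) at the block level, with the same allocation $y=\alpha/2$ and the same choice of threshold $t$ producing the stated constants. In fact you spell out the block-to-individual Rademacher conversion (ghost sample plus the observation that products $\eps_{j(i)}\eps'_i$ are i.i.d.\ Rademacher, which simultaneously handles the centering by $\E f$) in more detail than the paper, which compresses this into the phrase ``using the symmetrization argument again.''
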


\begin{Remark}
    In the case where \( \alpha \) is fixed, we can take \( \ell \simeq \log \tfrac{1}{\delta} \), so that with probability at least \( 1 - \delta \),
    \[
        \sup\limits_{f \in \F} \abs{\E f - \QOM_{\alpha}(f)} 
        \lesssim \; \E \sup\limits_{f \in \F}\left(\frac{1}{N} \sum_{i = 1}^{N} \eps_i f(X_i)\right) + \sqrt{\sup\limits_{f \in \F} \Var(f(X))\ \frac{\log\tfrac{1}{\delta}}{N}},
    \]
    where the first term represents the expectation of the empirical process, whereas the second term corresponds to a tail with the sub-Gaussian behavior. 
    Compare this inequality with Talagrand's inequality for empirical processes, where the assumption \(\sup\limits_{f \in \F} \abs{f(X)} \le C\) almost surely is needed (see Chapter~12 in \cite{boucheron2013concentration}).
\end{Remark}

As noticed by \cite{minsker2018uniform} (see equation~(2.7)) an inequality similar to \eqref{absvalueequation} of Lemma~\ref{lem:momuniform} for $\alpha = \frac{1}{2}$ follows from the proof of Theorem~2 in \citep*{lecue2018robust}. However, to the best of our knowledge, Lemma~\ref{lem:momuniform} in this form is not presented explicitly in the literature. We provide its proof in the appendix for the sake of completeness. 

\begin{proof}[Proof of Theorem~\ref{thm:simpleupperm}]
    \textbf{Step 1.} 
    First, we provide the high probability part of the analysis. Observe that
    \begin{align*}
    D(\hat{A}_{\delta, M}) - D(A^*) &= \E l_{\hat{A}_{\delta, M}} - \E l_{A^*}
    \\
    &\le \E l_{\hat{A}_{\delta, M}} - \MOM(l_{\hat{A}_{\delta, M}}) + \MOM(l_{A^*}) - \E l_{A^*}
    \\
    &\le 2 \sup\limits_{A \in \A_M^k} \abs*{\E l_{A} - \MOM(l_{A})},
    \end{align*}
    where we used $\MOM(l_{A^*}) \ge \MOM(l_{\hat{A}_{\delta, M}})$ since $A^* \in \A_M^k$. 
    We have by Lemma~\ref{lem:momuniform} that, with probability at least \( 1 - \delta \),
    \begin{align*}
        \sup_{A \in \A_{M}^k} \abs{\E l_{A}(X) - \MOM(l_{A})} 
        &\lesssim \E \sup_{A \in \A_{M}^k} \frac{1}{N} \sum_{i = 1}^{N} \eps_i l_{A}(X_i) + \sqrt{\sup_{A \in \A_{M}^k} \Var(l_{A}(X)) \frac{\log \frac{1}{\delta}}{N}},
    \end{align*}
    where \( \eps_1, \dots, \eps_N \) are independent Rademacher signs.
    Here, we have for each \( A \in \A_{M}^k \),
    \begin{align*}
        l_{A}(x)^2 & = \left(\norm{x - a_x}^{2} - \norm{x}^{2}\right)^2 = (\norm{x - a_x} - \norm{x})^2 (\norm{x - a_x} + \norm{x})^2 \\
        & \leq \norm{a_x}^2 (2 \norm{x} + \norm{a_x})^2,
    \end{align*}
    where \( a_x \in \mathrm{Arg}\min_{a \in A} \norm{x - a} \). Then, since \( \norm{a_x} \leq M \) for any \( x \), we have
    \[
        \Var(l_{A}(X)) \leq \E l_{A}(X)^2 \lesssim M^2 \left(M^2 + \E \norm{X}^{2}\right) \, .
    \]
    
    \textbf{Step 2.} 
    Note that $\hat{A}_{\delta, M}$ can consist of less than $k$ points. However, in this case we can always add the copies of some of them and identify $\hat{A}_{\delta, M}$ with $(a_1, \dots, a_k)$. 
    This does not change $l_{\hat{A}_{\delta, M}}$ and preserves the Voronoi partition of the space since the cells corresponding to the newly added points are empty. Finally, we estimate
    \begin{equation}
    \label{supofproc}
        \E \sup_{A \in \A_{M}^k} \frac{1}{N} \sum_{i = 1}^{N} \eps_i l_{A}(X_i).
    \end{equation}
    Consider the set $\F_{\A} = \left\{f_{A} \;:\; A \in \A_{M}^k\right\}$ of $\R^k$-valued functions such that for any $A = \{a_1, \ldots, a_k\},\ A \in \A_{M}^k$ we set
    \begin{equation}
    \label{fax}
        f_{A}(x) = \left(-2\langle x,a_1 \rangle + \norm{a_1}^2, \ldots, -2\langle x,a_k \rangle + \norm{a_k}^2\right).
    \end{equation}
    For $\cv \in \R^k$ let $\phi(\cv) = \min\limits_{i \leq k} c_i$. We obviously have $l_{A}(X) = \phi(f_{A}(X))$. Following the analysis of Section~3.2 in \citep{maurer2016vector} we have for any two $A$ and $B$ in $\A_M^k$,
    \[
        \abs*{\phi(f_{A}(X_i)) - \phi(f_{B}(X_i))} \le \norm*{f_A(X_i) - f_B(X_i)}_2.
    \]
    This allows us to use the $\ell_2$-contraction to upper bound \eqref{supofproc} with the quantity scaling linearly in $k$. To do so, we observe that Maurer's vector contraction inequality (Theorem~3 in \citep{maurer2016vector}) implies
    \[
        \E \sup_{A \in \A_{M}^k} \frac{1}{N} \sum_{i = 1}^{N} \eps_i l_{A}(X_i) 
        \le \frac{\sqrt{2}}{N} \left(2 \E \sup\limits_{A \in \A_{M}^k} \sum_{i, j = 1}^{N, k}\eps_{i, j}\langle X_i, a_j\rangle + \E\sup\limits_{A \in \A_{M}^k} \sum_{i, j = 1}^{N, k} \eps_{i, j} \norm{a_j}^2\right),
    \]
    where $\eps_{i, j}$, $i = 1, \ldots, N$, $j = 1, \ldots, k$, are independent Rademacher signs, and where $A = \{a_1, \ldots, a_k\}$. We further have by Khintchine's inequality,
    \begin{align*}
        \E \sup\limits_{A  \in \A_{M}^k} \sum_{i, j = 1}^{N, k} \eps_{i, j} \langle X_i, a_j\rangle &\le \sum\limits_{j = 1}^{k} \E \sup\limits_{A  \in \A_{M}^k} \left\langle \sum \limits_{i = 1}^{N} \eps_{i,j} X_i, a_j\right\rangle \le \sum\limits_{j = 1}^{k} \E \sup\limits_{A  \in \A_{M}^k} \norm*{\sum\limits_{i = 1}^{k} \eps_{i,j} X_i} \norm{a_j}
        \\
        &\le k M \max\limits_{j \leq k} \E \norm*{\sum\limits_{i = 1}^{N} \eps_{i,j} X_i} \le k M \max\limits_{j \leq k} \sqrt{\E \norm*{\sum\limits_{i = 1}^{N} \eps_{i,j} X_i}^2}
        \\
        &\le k M \sqrt{\sum\limits_{i = 1}^{N} \E \norm{X_i}^2},
    \end{align*}
    and also
    \[
        \E \sup\limits_{A \in \A_{M}^k} \sum_{i, j = 1}^{N, k} \eps_{i, j} \norm{a_j}^2 
        \le \sum\limits_{j = 1}^{k} \E \sup\limits_{A \in \A_{M}^k} \abs*{\sum_{i = 1}^{N}\eps_{i, j}} \norm{a_j}^2 
        \lesssim k M^2 \sqrt{N}.
    \]
    Finally, taking the expectation with respect to $X_1, \ldots, X_N$ and using Jensen's inequality we obtain an analog of~\eqref{boundedcase}. That is,
    \begin{equation}
    \label{lineark}
        \E \sup_{A \in \A_{M}^k} \frac{1}{N} \sum_{i = 1}^{N} \eps_i l_{A}(X_i) \lesssim \frac{k M \left(\sqrt{\E \norm{X}^2} + M\right)}{\sqrt{N}}.
    \end{equation}
    Combining the above bounds we prove the claim.
\end{proof}

It is by now well known that in our setup in the bounded case (e.g., when $\norm{X} \le T$ almost surely) the right dependence of the excess distortion on the number of clusters is $\sqrt{k}$ up to logarithmic factors \citep{fefferman2016testing, narayanan2010sample}. It is natural to ask if the same dependence is possible in our Theorem~\ref{thm:simpleupperm}. First, observe that in the unbounded case, there are some complications. In particular, our parameter $M = M(P, k)$ can also depend on $k$. This means that the overall dependence of the excess distortion on $k$ can be more complicated. Nevertheless, in the next section we show, among other things, that these improvements are possible and, in particular, the $k$-term is replaced by the $\sqrt{k}$-term. 

\section{Towards Better Bounds Based on \texorpdfstring{\(\pmin\)}{pmin}}
\label{sec:pminbound}

This section is devoted to our main results. We prove almost optimal non-asymptotic bounds for $k$-means. Recall that if $\E \norm{X}^2 < \infty$ we have for any optimal quantizer
\[
\pmin = \min_{a \in A^*} P(V_{a}) > 0,
\]
unless the support of \(P\) has less than \(k\) points. Notice that \( \pmin \) controls the magnitude of the largest vector in \(A^{*}\). Indeed, using the centroid condition, Jensen's inequality, and the Cauchy--Schwarz inequality we have for any $a \in A^*$,
\begin{equation}
\label{eq:boundsona}
    \norm{a} = \norm*{\E[X \vert \; X \in V_{a}]} 
    = \frac{\norm*{\E X \Ind[X \in V_a]}}{P(V_{a})} 
    \leq \frac{\E^{1/2} \norm{X}^{2}}{\sqrt{P(V_{a})}} 
    \le \frac{\E^{1/2} \norm{X}^{2}}{\sqrt{\pmin}} .
\end{equation}
This suggests that the mass of the lightest cluster of an optimal quantizer should affect the quality of any empirical quantizer. 

Let us return to Example~\ref{pminexample}. In this case we have $k = 2$, $M \le \sqrt{N}$, $\pmin = \frac{1}{N}$, $\E \norm{X}^2 \le 1$ and the bound \eqref{eq:boundsona} is tight. However, the bound of Theorem~\ref{thm:simpleupperm} is not tight anymore as it scales as $O(\sqrt{N})$. Indeed, Theorem~\ref{thm:simpleupperm} implies the bound 
$O\left(\frac{k \left(M^2 + M \sqrt{\E \norm{X}^2}\right)}{\sqrt{N}}\right)$ which is $O\left(\frac{k \E \norm{X}^2}{\pmin \sqrt{N}}\right)$ whenever \eqref{eq:boundsona} is tight.

The challenging part is to get the optimal dependence on $\pmin$ and $N$ in the excess distortion bound. In what follows, we show that the dependence $\Theta \left(\frac{1}{\sqrt{N \pmin}}\right)$ is achievable with respect to these parameters. The result of this form guarantees the consistency for sequences of distributions depending on $N$ as long as \(N \pmin \to \infty\) and the second moment is uniformly bounded. This extends the original asymptotic result of \cite{pollard1981strong} to the case where the distribution is allowed to change with $N$.

Suppose that we know the value of \(\pmin > 0\)  for at least one optimal quantizer.
Denote for short, \(P_N(V) = \frac{1}{N} \sum_{i = 1}^{N} \Ind[X_i \in V]\). Naturally, we want to find a solution \(\hat{A}\) such that the corresponding Voronoi cells are of measure at least \(\pmin > 0\) which translates into \(P_N(V_j) \ge \pmin/2\) due to concentration, provided that $N$ is large enough.
It implies that each cell corresponding to $\hat{A}$ contains enough sample points, which corresponds to the so-called \emph{constrained} $k$-means clustering. 
In $\R^d$ the algorithmic side of constrained clustering is well studied in the context of optimal transport and has numerous practical applications, see \citep*{ng2000note, cuturi2014fast, genevay2019differentiable} and references therein. 
We have additional motivation to introduce $\pmin$ since this quantity appears naturally in the condition implying the so-called \emph{fast rates} of the excess distortion in the bounded case \citep{levrard2015nonasymptotic}. 
Finally, recalling Example~\ref{pminexample} we know that in any reasonable clustering problem $\pmin \gg \frac{1}{N}$ which means that the optimal solution $A^*$ has enough observations in each cell. At the same time, we do not have such a natural preliminary guess on $M$.

As before, the number of blocks depends solely on the desired confidence level. 
Our main result is the following theorem.

\begin{Theorem}\label{thm:pmin}
    Fix $\delta \in (0, 1)$. Suppose, \(\min_{a \in A^{*}} P(V_a) \geq  \pmin \) for some optimal quantizer \(A^{*} \).
    There is an estimator $\hat{A}_{\delta, \pmin}$ that depends on $\pmin$ and $\delta$ such that, with probability at least \( 1 - \delta\),
    \[
    D(\hat{A}_{\delta, \pmin}) - D(A^*) \lesssim \E \norm{X - \mu}^{2} \left((\log N)^2 \sqrt{\frac{k}{N \pmin}} + \sqrt{\frac{\log \frac{1}{\delta}}{N \pmin}}\right) \, .
    \]
\end{Theorem}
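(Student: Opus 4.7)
The proof follows the template of Theorem~\ref{thm:simpleupperm}: build a constrained MOM estimator whose feasibility set contains $A^{*}$, reduce the excess distortion to a uniform deviation, and invoke Lemma~\ref{lem:momuniform}. As a preliminary step I would first subtract a robust mean estimate $\hat\mu$ of $\mu$ (for example one satisfying~\eqref{lugosimendelson}, whose error is of strictly smaller order than the target bound), which reduces us to the effectively centred case with $\E\norm{X-\hat\mu}^2 \lesssim \E\norm{X-\mu}^2$. In this centred setting~\eqref{eq:boundsona} forces $\norm{a} \le \sqrt{\E\norm{X-\mu}^{2}/\pmin} =: M$ for every $a$ in every optimal quantizer, so I would take
\[
    \hat{A}_{\delta, \pmin} \in \argmin_{A \in \A_{M}^{k}} \MOM(l_{A}), \qquad \ell \asymp \log(1/\delta),
\]
with $M$ (and, if needed, $\E\norm{X-\mu}^2$) replaced by robust data-driven surrogates at a cost absorbed into $(\log N)^{2}$. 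Since $A^{*} \in \A_M^k$, the two-line minimax argument of Theorem~\ref{thm:simpleupperm} gives $D(\hat{A}_{\delta,\pmin}) - D(A^{*}) \le 2 \sup_{A \in \A_M^k}\abs*{\E l_A - \MOM(l_A)}$.

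\emph{Variance term.} For any $A \subset B_M$, testing the centre $a_\mu \in A$ nearest to $\mu$ at the point $X$, and the centre $a_X \in A$ nearest to $X$ at the point $\mu$, yields
\[
    -2 \inr{X - \mu,\; a_X} \le l_A(X) - l_A(\mu) \le -2 \inr{X - \mu,\; a_\mu},
\]
hence $\abs{l_A(X) - l_A(\mu)} \le 2 M \norm{X - \mu}$ and therefore $\Var(l_A(X)) \le 4 M^{2}\, \E\norm{X-\mu}^{2} \lesssim (\E\norm{X-\mu}^{2})^{2}/\pmin$. Plugging this into Lemma~\ref{lem:momuniform} produces exactly the $\E\norm{X-\mu}^{2}\sqrt{\log(1/\delta)/(N\pmin)}$ term of the theorem.

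\emph{Rademacher term --- the main obstacle.} It remains to show
\[
    \E \sup_{A \in \A_M^k} \frac{1}{N} \sum_{i=1}^{N} \eps_i l_A(X_i) \lesssim \E\norm{X - \mu}^{2}\, (\log N)^{2} \sqrt{\frac{k}{N \pmin}}.
\]
The Maurer-based bound~\eqref{lineark} used in Theorem~\ref{thm:simpleupperm} is too lossy on two counts: vector contraction produces $k$ instead of $\sqrt{k}$, and the quadratic contribution scales as $M^{2}$ rather than as $M\sqrt{\E\norm{X-\mu}^{2}}$. I would remedy the first by a chaining argument in $L_{2}(P_N)$ over $\A_M^k$: each Voronoi cell is an intersection of at most $k-1$ half-spaces, so the class $\{l_A : A \in \A_M^k\}$ admits a VC-type metric-entropy bound $\log \mathcal{N}_{2}(\mathcal{G}, \eps, P_N) \lesssim k \log(M/\eps)$, and Dudley's integral then recovers the $\sqrt{k}$ rate. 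The second deficiency is cured by the preliminary recentring, which allows us to write the linear piece of $l_A$ as $-2 \inr{X_i-\mu, a}$, whose $L_2$-norm is controlled by $\E\norm{X-\mu}^2$.

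\emph{Role of the $(\log N)^{2}$ factor.} Under only two moments, $\norm{X_i}$ cannot be uniformly bounded, so the chaining must be carried out after a dyadic truncation of $\norm{X_i}$ at scales up to $\Theta(\sqrt{N\,\E\norm{X}^{2}})$, with the tail dominated via Markov; this contributes one logarithm, and the data-driven estimates of $\hat\mu$ and $M$ contribute the second via a final union bound over $O(\log N)$ auxiliary failure events. The crux of the whole argument is this chaining-under-truncation step for a Hilbert-valued process with only two moments; the rest is standard MOM and localization assembly.
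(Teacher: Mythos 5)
Your high-level plan (constrained MOM, then uniform deviation via Lemma~\ref{lem:momuniform}) matches the paper's strategy, but three of the ingredients you rely on are either missing or not valid as stated.

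\textbf{The estimator is not allowed to depend on $M$.} The theorem guarantees an estimator that depends only on $\pmin$ and $\delta$. Your $\hat{A}_{\delta,\pmin}\in\argmin_{A\in\A^k_M}\MOM(l_A)$ requires $M\simeq\sqrt{\E\norm{X-\mu}^2/\pmin}$, i.e.\ the (unknown) variance, and ``replacing $M$ by a robust data-driven surrogate'' is left entirely unspecified. The paper sidesteps this by constraining the \emph{empirical Voronoi masses}, namely $\min_{a\in A}P_N(V_a)\ge\pmin/2$, which uses only $\pmin$; Lemma~\ref{lem:optimizer_Mm} then shows that with high probability this constraint automatically forces $\max_a\norm a\lesssim\sqrt{\E\norm{X}^2/\pmin}$ \emph{and} $\min_a\norm a\lesssim\sqrt{\E\norm{X}^2}$. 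The second inequality is not decorative: it is essential both for the $L_2$-diameter bound $\sum_a\norm a^2 P(V_a)\lesssim m^2+\E\norm{X}^2$ (Lemma~\ref{lem:A_radius}) and for the covering-number analysis. Constraining only the maximum norm, as in $\A^k_M$, does not give this. Related to this, the preliminary recentring by a robust $\hat\mu$ is unnecessary and adds genuine bookkeeping (random centre, data splitting); the paper simply observes the MOM criterion is translation invariant and assumes $\E X=0$ WLOG.

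\textbf{The metric-entropy bound you assert is false in the generality needed.} You claim $\log\mathcal N_2(\{l_A:A\in\A^k_M\},\eps,P_N)\lesssim k\log(M/\eps)$ ``by a VC-type argument because cells are intersections of halfspaces.'' In a general separable Hilbert space halfspaces have infinite VC dimension, so no dimension-free bound of this type can hold; even in $\R^d$ the correct VC-type bound carries an extra factor $d$. This is exactly the technical crux that the paper's Lemma~\ref{lem:cov_number} addresses: for Hilbert spaces it proves (via Johnson--Lindenstrauss) an entropy bound of the form $\log\mathcal N_2\lesssim \frac{kM^2(m^2+P_N\norm X^2)\log N}{t^2}\log\frac{\cdots}{t}$, and this $1/t^2$ dependence plus the extra logarithms are what produce the $(\log N)^2\sqrt{k/(N\pmin)}$ term in the theorem. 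Your claimed bound, if true, would give an $O(\sqrt{k/N})$ rate with no logarithms, which is strictly stronger than the theorem and than what the authors could establish.

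\textbf{The dyadic truncation step is not where the difficulty is.} You identify ``chaining under truncation'' as the crux and source of the $(\log N)^2$ factor. In the paper, no truncation of $\norm{X_i}$ is performed: the Dudley integral is computed with respect to the random empirical metric $L_2(P_N)$ directly, using the $L_2(P_N)$-diameter of $\F^k_{M,m}$ coming from Lemma~\ref{lem:A_radius} (valid for the empirical measure). The logarithmic factor arises from the covering-number bound, not from tail truncation. So while the overall high-level outline you give is recognisable, the parts you flag as ``standard'' (covering numbers) are the genuinely nonstandard ones, and the part you flag as the crux (truncation) does not appear in the actual proof.
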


Let us now present our estimator.

\begin{framed}
    \textbf{The estimator of Theorem~\ref{thm:pmin}}. We set 
    \begin{equation}\label{pmin_estimator}
        \hat{A}_{\delta, \pmin} = \argmin_{\substack{A \in \A^k \\ \min\limits_{a \in A} P_N(V_a) \geq \pmin / 2}} \MOM(l_{A}),
    \end{equation}
    with the number of blocks $\ell = 12 \left\lceil \log \tfrac{6}{\delta}\right\rceil + 1$.
\end{framed}

The idea behind this estimator is quite natural: we guarantee the robustness by using the MOM principle and by restricting our attention only to the cells containing enough points. As already mentioned, this is essentially a robust version of the constrained $k$-means quantizer introduced in \cite{ng2000note}.

We introduce several technical results that together lead us to the proof of Theorem~\ref{thm:pmin} at the end of this section. 
Since the estimator we consider is translation invariant, we can assume that \( \E X = 0 \) in the proof without loss of generality. As previously, our main tool is the concentration of MOM for a suitably chosen subset of \(\left\{l_{A} \;:\; A \in \A^k\right\}\). We show that the restriction \( P_{N}(V_j) \geq \pmin / 2 \) in \eqref{pmin_estimator} implies a convenient bound for the vectors in the resulting empirical quantizer. Let us define the following class of quantizers:
\begin{equation}\label{def:A_Mm}
    \A_{M,m}^k = \Bigl\{A \in \A^k \;:\; \min_{a \in A} \norm{a} \le m,\; \max_{a \in A} \norm{a} \le M\Bigr\}, \quad 0 < m \le M.
\end{equation}

The following lemma says that with high probability all the solutions corresponding to $\hat{A}_{\delta, \pmin}$ are bounded which is, of course, natural in view of the proof of Theorem~\ref{thm:simpleupperm}. However, the key technical observation is that we also need to control the smallest norm by saying that there is at least one center in $\hat{A}_{\delta, \pmin}$ which is relatively close to the actual expectation. In order to show this we do not have to use any uniform results that hold simultaneously for the entire class $\A^k$. Therefore, we have the following property.

\begin{Lemma}
\label{lem:optimizer_Mm}
    With probability at least $1 - e^{- \ell / 12}$, it holds that simultaneously for all $A \in \A^k$ such that $\MOM(l_A) \le 0$,
    \[
    \min_{a \in A} \norm{a} \le m = 4 \sqrt{2 \E \norm{X}^2}.
    \]
    Moreover, with probability at least $1 - \left(e^{- \ell / 12} + e^{- N \pmin / 12}\right)$, it holds that
    \[
    \hat{A}_{\delta, \pmin} \in \A^k_{M, m}
    \quad\text{for } m \text{ defined above and } M = 10 \sqrt{\frac{\E \norm{X}^2}{\pmin}}.
    \]
\end{Lemma}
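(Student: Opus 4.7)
The plan is to prove the two displayed assertions separately and combine them by a union bound; throughout I may assume \( \E X = 0 \) without loss of generality by translation invariance of the problem.

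For the first assertion, my strategy is to dominate \( l_A \) pointwise by a single function \( Y \) that does not depend on \( A \), valid for every \( A \) with \( \min_{a \in A} \norm{a} > m \). If \( \norm{x} \le m/2 \), the triangle inequality yields \( \min_{a \in A}\norm{x - a} > m/2 \), hence \( l_A(x) = \min_a \norm{x - a}^2 - \norm{x}^2 \ge m^2/4 - \norm{x}^2 \); for \( \norm{x} > m/2 \) I only need the trivial bound \( l_A(x) \ge -\norm{x}^2 \). Together these give \( l_A(x) \ge Y(x) := (m^2/4)\Ind[\norm{x}\le m/2] - \norm{x}^2 \). Since \( \MOM \) is monotone under pointwise ordering of functions (each block mean is, and so is each order statistic), it suffices to show \( \MOM(Y) > 0 \) with the required probability. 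With \( m = 4\sqrt{2\E\norm{X}^2} \), Markov's inequality gives \( P(\norm{X} > m/2) \le 1/8 \) and hence \( \E Y \ge 6\E\norm{X}^2 \). Each block mean \( Z_j \) of \( Y \) satisfies \( Z_j \le m^2/4 = 8\E\norm{X}^2 \); applying Markov to the nonnegative variable \( m^2/4 - Z_j \) yields \( P(Z_j \le 0) \le 1/4 \). The indicators \( \Ind[Z_j > 0] \) are independent Bernoullis with mean \( \ge 3/4 \), so Hoeffding's inequality gives \( P(\MOM(Y) \le 0) \le e^{-\ell/8} \le e^{-\ell/12} \).

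For the second assertion, I first verify that \( \MOM(l_{\hat{A}_{\delta, \pmin}}) \le 0 \) and then leverage the first part. The singleton \( A_0 = \{0\} \) lies in \( \A^k \) and its only Voronoi cell is all of \( E \), so \( A_0 \) is feasible for the constrained problem; hence \( \MOM(l_{\hat{A}_{\delta,\pmin}}) \le \MOM(l_{A_0}) = 0 \) and by the first assertion there exists \( \hat{a}_0 \in \hat{A}_{\delta,\pmin} \) with \( \norm{\hat{a}_0} \le m \). For any other \( \hat{a} \in \hat{A}_{\delta,\pmin} \), every \( X_i \) in its Voronoi cell satisfies \( \norm{X_i - \hat{a}} \le \norm{X_i - \hat{a}_0} \); expanding both squared norms and applying Cauchy--Schwarz yields the geometric lower bound \( \norm{X_i} \ge (\norm{\hat{a}} - m)/2 \) (assuming \( \norm{\hat{a}} \ge m \), else nothing to prove). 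Consequently \( P_N(V_{\hat{a}}) \le P_N(\norm{X} \ge (\norm{\hat{a}} - m)/2) \), while the constraint in \eqref{pmin_estimator} forces \( P_N(V_{\hat{a}}) \ge \pmin/2 \).

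It therefore suffices to show that, with probability at least \( 1 - e^{-N\pmin/12} \), the event \( P_N(\norm{X} \ge (M-m)/2) < \pmin/2 \) holds for \( M = 10\sqrt{\E\norm{X}^2/\pmin} \), since then \( \norm{\hat{a}} > M \) would contradict the previous display. A direct estimate shows \( (M-m)/2 \gtrsim \sqrt{\E\norm{X}^2/\pmin} \) (using \( \pmin \le 1/k \le 1/2 \) in the non-trivial range), so Markov's inequality yields \( P(\norm{X} \ge (M-m)/2) \le \pmin/c \) for a constant \( c \) that the choice \( M = 10\sqrt{\E\norm{X}^2/\pmin} \) can make as large as needed. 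A Bernstein/Chernoff bound for the empirical frequency of this event then produces the required \( e^{-N\pmin/12} \) tail, after which a union bound closes the argument. The main conceptual step, and the place I expect most of the intellectual difficulty, is the pointwise dominating function \( Y \) in the first assertion: its independence of \( A \) is exactly what lets us avoid any Rademacher-type uniform bound over the unbounded class \( \A^k \), which would be the natural obstacle if one tried to concentrate \( \MOM(l_A) \) uniformly in \( A \).
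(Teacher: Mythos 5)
Your proposal is correct and follows essentially the same two-step architecture as the paper: (i) dominate $l_A$ from below, uniformly over all $A$ with $\min_{a\in A}\norm{a}\ge m$, by a single $A$-independent function and concentrate its MOM; (ii) use $\norm{x}\ge(\norm{a}-\norm{b})/2$ on Voronoi cells plus Chebyshev and a Chernoff bound on empirical frequencies. The only (inessential) difference is that where the paper invokes Lemma~\ref{lem:quant_ratio} to bound $\MOM\bigl((m^2/4)\Ind[\norm{X}>m/2]+\norm{X}^2\bigr)$, you inline the same argument by applying Markov to each block mean and then a Hoeffding/binomial tail bound—yielding the slightly sharper $e^{-\ell/8}$ in place of the paper's $e^{-\ell/12}$.
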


\begin{Remark}\label{rem:simple_bound}
    Note that the first statement of the above lemma gives us a prior bound on the excess distortion $D(\hat{A}_{\delta, \pmin}) - D(A^*)$. Indeed, since $\ell \ge 12 \log \tfrac{1}{\delta}$, with probability at least $1 - \delta$, we have $\min_{a \in \hat{A}_{\delta, \pmin}} \norm{a} \le m$, thus
    \begin{equation}\label{eq:simple_bound}
        D(\hat{A}_{\delta, \pmin}) - D(A^*) \le D(\hat{A}_{\delta, \pmin}) = \E \min_{a \in \hat{A}_{\delta, \pmin}} \norm{a - X}^2 \le \E (m + \norm{X})^2 \lesssim \E \norm{X}^2.
    \end{equation}
\end{Remark}

Before going to the proof of Lemma~\ref{lem:optimizer_Mm}, let us state the following trivial result on empirical quantiles. We postpone its proof to the appendix.

\begin{Lemma}
\label{lem:quant_ratio}
    Let $\xi_1, \dots, \xi_\ell$ be i.i.d.\ random values such that $\E \xi < \infty$ and $\xi \ge 0$ almost surely. 
    Then for any $0 < \alpha < 1$ we have
    \[
    \P\left(\mathrm{Quant}_{1 - \alpha}(\xi_1, \dots, \xi_\ell) \ge \frac{2}{\alpha} \E \xi\right) \le \exp\left(- \frac{\alpha \ell}{6}\right).
    \]
\end{Lemma}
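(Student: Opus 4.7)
The plan is to reduce the event $\{\mathrm{Quant}_{1-\alpha}(\xi_1,\dots,\xi_\ell)\ge \tfrac{2}{\alpha}\E\xi\}$ to a binomial deviation event and then apply a standard multiplicative Chernoff bound. Concretely, set $t = \tfrac{2}{\alpha}\E\xi$. By the definition $\mathrm{Quant}_{1-\alpha}(\xi_1,\dots,\xi_\ell) = \xi^{(\lceil(1-\alpha)\ell\rceil)}$, the event in question is equivalent to saying that at least $\ell - \lceil(1-\alpha)\ell\rceil + 1 = \lceil\alpha\ell\rceil \ge \alpha\ell$ of the $\xi_i$ exceed $t$. (This small arithmetic with ceilings is the only bookkeeping step; it is easily checked by splitting on whether $\alpha\ell$ is an integer.)

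Next, since $\xi\ge 0$, Markov's inequality gives
\[
p := \P(\xi_1 \ge t) \;\le\; \frac{\E\xi}{t} \;=\; \frac{\alpha}{2}.
\]
The indicators $Z_i = \Ind[\xi_i \ge t]$ are i.i.d.\ Bernoulli$(p)$ with $p\le\alpha/2$, and so $S := \sum_{i=1}^{\ell} Z_i$ is stochastically dominated by $\mathrm{Bin}(\ell,\alpha/2)$. The problem is therefore reduced to bounding $\P(\mathrm{Bin}(\ell,\alpha/2) \ge \alpha\ell)$, i.e.\ a deviation by a factor of $2$ above the mean $\mu = \alpha\ell/2$.

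For this I would invoke the standard multiplicative Chernoff inequality
\[
\P\bigl(\mathrm{Bin}(\ell,p) \ge (1+\delta)\mu\bigr) \;\le\; \exp\!\left(-\frac{\mu\,\delta^{2}}{2+\delta}\right),
\]
applied with $\delta = 1$ and $\mu = \alpha\ell/2$, which yields precisely $\exp(-\alpha\ell/6)$ and hence the desired inequality. I do not anticipate any genuine obstacle here: the only mildly delicate points are (i) the ceiling argument identifying the event with $\{S \ge \lceil\alpha\ell\rceil\}$, and (ii) choosing the constant $2$ in the threshold $t = \tfrac{2}{\alpha}\E\xi$ together with $\delta = 1$ in Chernoff to land on the clean constant $1/6$ in the exponent. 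Any standard Chernoff reference (e.g.\ the Bernstein-type bound in Boucheron--Lugosi--Massart) suffices.
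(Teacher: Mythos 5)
Your proof is correct and follows essentially the same route as the paper: reduce the quantile event to counting how many $\xi_i$ exceed $t=\tfrac{2}{\alpha}\E\xi$, apply Markov to get $\P(\xi\ge t)\le\alpha/2$, and finish with the multiplicative Chernoff bound for the binomial with $\delta=1$, yielding $\exp(-\alpha\ell/6)$. (The ceiling identity you state actually fails by $+1$ when $\alpha\ell$ is an integer, but since you only use the lower bound $\ge\alpha\ell$ this has no effect.)
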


\begin{proof}[Proof of Lemma~\ref{lem:optimizer_Mm}]
    \textbf{Step 1.} 
    First, let us prove the bound on the minimal norm.
    Consider $A \in \A^k$ such that $\min_{a \in A} \norm{a} \ge m$. Then for any $x \in B_{m / 2}$ (recall that $B_{m/2}$ is a ball of radius $m/2$ centered at the origin) it holds that $\min_{a \in A} \norm{a - x} \ge \frac{m}{2}$, thus for all $x \in E$,
    \begin{align*}
        l_A(x) = \min_{a \in A} \norm{a - x}^2 - \norm{x}^2 
        &\ge \frac{m^2}{4} \Ind\left[\norm{x} \le \frac{m}{2}\right] - \norm{x}^2
        \\
        &= \frac{m^2}{4} - \left(\frac{m^2}{4} \Ind\left[\norm{x} > \frac{m}{2}\right] + \norm{x}^2\right),
    \end{align*}
    and hence 
    \[
    \MOM(l_A) \ge \frac{m^2}{4} - \MOM\left(\frac{m^2}{4} \Ind\left[\norm{X} > \frac{m}{2}\right] + \norm{X}^2\right).
    \]
    According to Lemma~\ref{lem:quant_ratio}, with probability at least $1 - e^{- \ell / 12}$, it holds that
    \begin{align*}
        \MOM\left(\frac{m^2}{4} \Ind\left[\norm{X} > \frac{m}{2}\right] + \norm{X}^2\right) 
        &\le 4 \E \left(\frac{m^2}{4} \Ind\left[\norm{X} > \frac{m}{2}\right] + \norm{X}^2\right) \\
        &= 4 \left(\frac{m}{2}\right)^{2} \P\left(\norm{X} > \frac{m}{2} \right) + 4 \E \norm{X}^{2} \\
        &< 8 \E \norm{X}^2,
    \end{align*}
    where in the last inequality we apply strict Chebyshev's inequality (assuming \(\E \norm{X}^2 > 0\)). Thus, simultaneously for all $A \in \A^k$ satisfying $\min_{a \in A} \norm{a} \ge m$ we have
    \[
    \MOM(l_A) > \frac{m^2}{4} - 8 \E \norm{X}^2 = 0.
    \]
    In particular, since $\{0\}$ is one of the potential candidates for $\hat{A}_{\delta, \pmin}$ and $\MOM(l_{\{0\}}) = 0$, we have
    \[
    \min_{a \in \hat{A}_{\delta, \pmin}} \norm{a} < m.
    \]
    
    \textbf{Step 2.}
    Now consider $A \in \A^k$ such that there is $b \in A$ with $\norm{b} \le m$.
    It is easy to see that for any $a \in A$, $x \in V_a$ implies $\norm{a - x} \le \norm{b - x}$, thus
    \begin{equation}\label{eq:norm_x}
        \norm{x} \ge \frac{\norm{a} - \norm{b}}{2}.
    \end{equation}
    Assume $\norm{a} > M$ for some $a \in A$, then $\norm{x} > \frac{M - m}{2}$ for any $x \in V_a$. Hence,
    \[
    P_N(V_a) \le P_N\left(\norm{X} > (M - m)/2\right).
    \]
    At the same time, by strict Chebyshev's inequality we have (once $\E \norm{X}^2 > 0$) that
    \[
    P\left(\norm{X} > (M - m)/2\right) < \frac{4 \E \norm{X}^2}{(M - m)^2} \le \frac{\pmin}{4}.
    \]
    
    Now Chernoff's bound for Bernoulli random variables (p.~48, \cite{boucheron2013concentration}) yields that, with probability at least \(1 - e^{- N \pmin / 12}\), 
    \[
        P_N\left(\norm{X} > (M - m)/2\right) 
        \le P\left(\norm{X} > (M - m)/2\right) + \frac{\pmin}{4}
        < \frac{\pmin}{2}.
    \]
    This implies \(P_N(V_a) < \frac{\pmin}{2}\),
    what means that none of such $A$ can be chosen by our estimator. 
    By the union bound, we finally get that $\hat{A}_{\delta, \pmin} \in \A^k_{M, m}$, with probability at least \(1 - \left(e^{- \ell / 12} + e^{- N \pmin / 12}\right)\).
\end{proof}

The next step is to provide a uniform concentration of MOM over a class of quantizers \(\A^k_{M, m}\). First, we estimate the $L_2$-diameter and covering numbers of the functional class corresponding to $\A_{M, m}^k$: 
\begin{equation}\label{def:F_Mm}
    \F_{M, m}^k = \left\{l_A \;:\; A \in \A_{M, m}^k\right\}.
\end{equation}

\begin{Lemma}
\label{lem:A_radius}
    For any distribution $P$ and any set $A \in \A^k_{M, m}$ it holds that
    \begin{equation}\label{eq:A_radius}
        \sum_{a \in A} \norm{a}^2 P(V_a) \le 2 m^2 + 8 \E \norm{X}^2,
    \end{equation}
    and
    \begin{equation}\label{eq:var_Mm}
        \E l_A^2(X) \le 4 M^2 \left(m^2 + 6 \E \norm{X}^2\right).
    \end{equation}
\end{Lemma}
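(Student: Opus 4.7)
The plan is to extract one simple geometric inequality from the assumption $A\in\A^k_{M,m}$ and then feed it into both estimates by integrating against the Voronoi partition. Specifically, since $\min_{a'\in A}\norm{a'}\le m$, fix some $b\in A$ with $\norm{b}\le m$. For every $a\in A$ and every $x\in V_a$, the Voronoi inequality $\norm{x-a}\le \norm{x-b}$ combined with two applications of the triangle inequality yields
\[
\norm{a}\le \norm{x-a}+\norm{x}\le \norm{x-b}+\norm{x}\le 2\norm{x}+\norm{b}\le 2\norm{x}+m,
\]
so that $\norm{a}^2\le 8\norm{x}^2+2m^2$ by $(p+q)^2\le 2p^2+2q^2$.

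For \eqref{eq:A_radius}, I would write $\norm{a}^2 P(V_a)=\E[\norm{a}^2\Ind[X\in V_a]]$ and insert the pointwise bound just obtained:
\[
\norm{a}^2 P(V_a)\le \E\bigl[(8\norm{X}^2+2m^2)\Ind[X\in V_a]\bigr]=8\,\E[\norm{X}^2\Ind[X\in V_a]]+2m^2\,P(V_a).
\]
Summing over $a\in A$ and using that the cells partition $E$ (so that $\sum_a P(V_a)\le 1$ and $\sum_a \E[\norm{X}^2\Ind[X\in V_a]]\le \E\norm{X}^2$) gives the claim.

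For \eqref{eq:var_Mm}, note that on $V_a$ one has $l_A(x)=\norm{x-a}^2-\norm{x}^2=-2\inr{x,a}+\norm{a}^2$, so by Cauchy--Schwarz $\abs{l_A(x)}\le 2\norm{x}\norm{a}+\norm{a}^2$. Squaring and using $(p+q)^2\le 2p^2+2q^2$ together with $\norm{a}\le M$ gives
\[
l_A(x)^2\le 8\norm{x}^2\norm{a}^2+2\norm{a}^4\le M^2\bigl(8\norm{x}^2+2\norm{a}^2\bigr).
\]
Plugging in the pointwise bound $\norm{a}^2\le 8\norm{x}^2+2m^2$ valid on $V_a$ yields $l_A(x)^2\le 4M^2(6\norm{x}^2+m^2)$ on $V_a$, and integrating over the partition completes the proof.

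I do not anticipate any real obstacle: everything reduces to the observation that, whenever one center has small norm, every other center's norm on its Voronoi cell is controlled by $\norm{x}$ plus $m$. The only thing to be careful about is that $b$ need not equal $a$, so one must use the Voronoi defining inequality rather than a centroid condition, which is what keeps the argument valid for any $A\in \A^k_{M,m}$ regardless of optimality.
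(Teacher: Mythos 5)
Your proof is correct and matches the paper's argument in all essentials: both begin by fixing $b \in A$ with $\norm{b} \le m$, deduce $\norm{a} \le 2\norm{x} + m$ for $x \in V_a$ from the Voronoi inequality, integrate over the partition to get \eqref{eq:A_radius}, and then square the pointwise bound $\abs{l_A(x)} \le 2\norm{x}\norm{a} + \norm{a}^2$ to get \eqref{eq:var_Mm}. The only cosmetic difference is that the paper substitutes \eqref{eq:A_radius} as a whole into the second estimate, whereas you substitute the pointwise bound on $\norm{a}^2$ before integrating, which lands on exactly the same constants.
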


\begin{proof}
    Fix $A \in \A^k$ and let $b \in A$ be such that $\norm{b} \le m$. 
    Then for any $a \in A$ and $x \in V_a$ it holds from~\eqref{eq:norm_x} that \(\norm{a} \le \norm{b} + 2 \norm{x} \le m + 2 \norm{x}\).
    Therefore,
    \[
    \sum_{a \in A} P(V_a) \norm{a}^2 = \E \sum_{a \in A} \Ind[X \in V_a] \norm{a}^2 
    \le \E \left(m + 2 \norm{X}\right)^2 \le 2 m^2 + 8 \E \norm{X}^2.
    \]
    Further, we easily have using \eqref{eq:A_radius}
    \begin{align*}
        \E l_A^2(X) 
        &\le \sum_{a \in A} P(V_a) \E\left[\left(\norm{a}^2 + 2 \norm{a} \cdot \norm{X}\right)^2 \middle| X \in V_a\right] \\
        & \le 2 M^2 \left(\sum_{a \in A} P(V_a) \norm{a}^2 + 4 \E \norm{X}^2\right) \\
        & \le 4 M^2 \left(m^2 + 6 \E \norm{X}^2\right).
    \end{align*}
\end{proof}

The next technical lemma is one of our main contributions which can be of independent interest. It states the upper bounds on $\log \mathcal{N}_{2}\left(\F_{M, m}^k, t, P_N\right)$ for general separable Hilbert spaces as well as for $\R^d$. The question on sharp bounds on covering numbers for the classes of functions indexed by $\A^k$ appeared naturally in the analysis of $k$-means clustering in the uniformly bounded case. The way to do it is to estimate the so-called fat-shattering dimension \citep*{narayanan2010sample, fefferman2016testing} or to decompose the covering numbers as a product of $k$ covering numbers of some simpler classes indexed by $\A$ as in \citep*{brownlees2015empirical, foster2019ell_}. Furthermore, in the special case of $\R^d$, the analysis can be done via the computation of Pollard's pseudodimension \citep{bachem2017uniform}.
Unfortunately, it seems that these approaches are better tuned to the analysis of uniformly bounded distributions or to the finite dimensional case. Our approach is based on direct computations of these covering numbers via the Johnson--Lindenstrauss lemma \citep*{johnson1984extensions}, and in $\R^d$ our analysis removes the unnecessary logarithmic factors appearing in some previous works in the bounded case.

\begin{Lemma}
\label{lem:cov_number}
    For any \(0 < t < \diam_{2}\left(\F_{M, m}^k, P_N\right)\) it holds that
    \begin{equation}
    \label{eq:cov_number}
        \log \mathcal{N}_{2}\left(\F_{M, m}^k, t, P_N\right)
        \lesssim \frac{k M^2 \left(m^2 + P_N \norm{X}^2\right) \log N}{t^2} \log \frac{M \left(m + \sqrt{P_N \norm{X}^2}\right)}{t}.
    \end{equation}
    Moreover, if $E = \R^d$, then
    \begin{equation}
    \label{eq:cov_number_Rd}
        \log \mathcal{N}_{2}\left(\F_{M, m}^k, t, P_N\right)
        \lesssim k d \log \frac{M \left(m + \sqrt{P_N \norm{X}^2}\right)}{t}.
    \end{equation}
\end{Lemma}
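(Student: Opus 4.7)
The plan is to first prove a Lipschitz-type bound relating the $L_{2}(P_{N})$-distance between losses to a Hilbert distance on centers, then build an explicit Euclidean net in the $\R^{d}$ case, and finally invoke a Johnson--Lindenstrauss reduction for the general Hilbert setting. For the Lipschitz estimate, since $\min$ is $1$-Lipschitz with respect to $\ell_{\infty}$, for any index pairing $a_{j} \leftrightarrow b_{j}$ between $A, B \in \A_{M, m}^{k}$,
\[
    \abs{l_{A}(x) - l_{B}(x)} \le \max_{j} \abs*{\norm{a_{j} - x}^{2} - \norm{b_{j} - x}^{2}} = \max_{j} \abs*{\inr{a_{j} - b_{j},\, a_{j} + b_{j} - 2x}}.
\]
The crucial use of $\A_{M,m}^{k}$ is that both $A$ and $B$ contain a center of norm at most $m$, which by the triangle-inequality argument already appearing in \eqref{eq:norm_x} forces the active Voronoi center at any $x$ to satisfy $\norm{a^{*}} \le m + 2\norm{x}$. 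Choosing the pairing via nearest projection onto a common net and exploiting that the active index in each configuration has a small-norm center, I would arrive at
\[
    \norm{l_{A} - l_{B}}_{L_{2}(P_{N})} \lesssim \varepsilon \bigl(m + \sqrt{P_{N} \norm{X}^{2}}\bigr),
\]
where $\varepsilon$ is the coordinatewise Hilbert distance between paired centers. The replacement of $M$ by $m$ here is what ultimately produces the target scale $M(m + \sqrt{P_{N}\norm{X}^{2}})$.

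For the $\R^{d}$ case, I would take a Euclidean $\varepsilon$-net $\mathcal{M}$ of $B_{M}$ with $\abs{\mathcal{M}} \le (3M/\varepsilon)^{d}$, and for each quantizer $A$ form $A'$ by replacing every center with its nearest point in $\mathcal{M}$. Since $A$ contains an element of norm at most $m$, its projection lies within $m + \varepsilon$ of the origin, so $A'$ also has a small-norm center and the Lipschitz estimate above applies. The number of distinct $A'$ is at most $\abs{\mathcal{M}}^{k} \le (3M/\varepsilon)^{kd}$, and setting $\varepsilon \simeq t / \bigl(m + \sqrt{P_{N}\norm{X}^{2}}\bigr)$ produces the target bound \eqref{eq:cov_number_Rd}.

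For a general separable Hilbert space, I would reduce to the finite-dimensional bound via Johnson--Lindenstrauss. Since $l_{A}(X_{i})$ depends on $a_{j}$ only through $\inr{X_{i}, a_{j}}$ and $\norm{a_{j}}^{2}$, I may project each center onto $V = \mathrm{span}(X_{1}, \dots, X_{N})$ without changing any of these quantities, carrying the orthogonal component as a single non-negative scalar. With $\dim V \le N$, apply a random linear map $\Pi : V \to \R^{D}$ with $D \lesssim \log N / \eta^{2}$ preserving the Gram matrix of $\{X_{1}, \dots, X_{N}\}$ up to relative error $\eta$. After projection, the argument of the $\R^d$ step runs in $\R^{D}$ at scale $t/2$, while $\eta$ is calibrated so that the JL distortion, after the Lipschitz amplification of Step~1, contributes at most $t/2$ to the $L_{2}(P_{N})$ error. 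The balance $\eta \simeq t / \bigl(M(m + \sqrt{P_{N}\norm{X}^{2}})\bigr)$ gives $D \lesssim M^{2}(m^{2} + P_{N}\norm{X}^{2}) \log N / t^{2}$, and substituting into $k D \log(\cdot)$ produces exactly \eqref{eq:cov_number}.

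The main obstacle is executing the Johnson--Lindenstrauss step cleanly: the centers $a_{j}$ are not part of a fixed finite set, so the standard JL lemma does not a priori preserve $\inr{X_{i}, a_{j}}$ uniformly in $a_{j}$. The resolution is the preliminary projection onto $V$, after which the relevant inner products are linear combinations of entries of the Gram matrix of the data that JL does preserve, with coefficients controlled through the bounds $\norm{a_{j}} \le M$ and the Voronoi-based estimate on the active center. The quantitative trade-off between $\eta$ (governing the reduced dimension $D$) and $\varepsilon$ (governing the net scale) must then be tuned so that both the factor $M^{2}(m^{2} + P_{N}\norm{X}^{2})\log N$ and the additional logarithmic term emerge with the exact exponents appearing in \eqref{eq:cov_number}.
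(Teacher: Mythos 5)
The $\R^d$ part of your proposal is essentially on track, though the Lipschitz estimate needs a bit more care than ``$\min$ is $\ell_\infty$-Lipschitz'': the bound $\abs{\inr{a_j - b_j, a_j + b_j - 2x}}$ has a factor of $\norm{a_j} + \norm{b_j}$ which is a priori of order $M$, and the paper replaces it by a factor of order $m + \norm{x}$ only for the \emph{active} Voronoi indices, patching the remainder with the elementary inequality $u^2 \wedge v^2 \le uv$ applied to the two competing crude bounds $\abs{l_A(x) - l_B(x)} \le (\norm{x} + m)^2$ and $\abs{l_A(x) - l_B(x)} \lesssim (\norm{a_j} + \norm{b_s} + \norm{x} + m)\max_r\norm{a_r - b_r}$. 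Your sketch gestures at this, and the net construction and bookkeeping then give \eqref{eq:cov_number_Rd}.

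The general Hilbert-space step, however, has a genuine gap, and it is precisely at the point you flag as the main obstacle. You propose to resolve the ``centers are a continuum'' problem by projecting onto $V = \mathrm{span}(X_1, \dots, X_N)$ and then applying Johnson--Lindenstrauss to preserve only the Gram matrix of the data, arguing that $\inr{X_i, a_j}$ is a linear combination of Gram entries with coefficients controlled by $\norm{a_j} \le M$. This is false: if $a_j = \sum_i c_i X_i$, the JL error in $\inr{\Pi X_i, \Pi a_j}$ is governed by $\norm{c}_1$, not $\norm{c}_2 \sim \norm{a_j}$, and already for orthonormal $X_i$ this can be as large as $\eta\sqrt{N}\norm{a_j}$; for poorly conditioned data it is unbounded. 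So a random map with $D \lesssim \log N / \eta^2$ preserving only the data Gram matrix does \emph{not} preserve the losses $l_A$ uniformly over $A$. The paper's fix is different and essential: it starts from a maximal $t$-separated subset $\mathcal{P}(t) \subset \A^k_{M,m}$, applies JL to the \emph{augmented} finite set $Q_A = \{X_1, \dots, X_N\} \cup A$ for each fixed $A$ (so the centers themselves are in the JL point set, at the cost of only $+k$ points), and then uses a Fubini/averaging argument over $(L, A)$ to produce a single $d$-dimensional subspace $L$ on which the JL guarantee holds simultaneously for at least half of $\mathcal{P}(t)$. That retained half is then a $t/2$-separated set in the projected class, and its cardinality is bounded by the $\R^d$ covering number in dimension $d \lesssim \log(N+k)/\eps^2$ with $\eps = t/(12 M \sigma_N)$. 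Without this counting trick, your JL step does not close.
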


The proof of this fact is deferred to the appendix. With this result in mind we are ready to show the following uniform bound.
\begin{Lemma}
\label{lem:concentration_Mm}
    Fix the number of blocks $\ell$ and assume that $\ell$ divides $N$.
    Then for any fixed $\alpha \in (0, 1)$ with \( \ell \alpha \) being non-integer, with probability at least \(1 - e^{- \alpha^2 \ell / 2}\),
    \begin{equation}
    \label{eq:concentration_Mm}
        \sup_{A \in \A_{M, m}^k} \left(\E l_{A}(X) - \QOM_{\alpha}(l_{A})\right)
        \lesssim M \left(m + \sqrt{\E \norm{X}^2}\right) \left(\frac{(\log N)^2}{\alpha} \sqrt{\frac{k}{N}} + \sqrt{\frac{\ell}{\alpha N}}\right),
    \end{equation}
    as well as, with probability at least \(1 - e^{- (1 - \alpha)^2 \ell / 2}\),
    \begin{align}
    \label{eq:concentration_Mm_uppertail}
        &\sup_{A \in \A_{M, m}^k} \left(\QOM_{\alpha}(l_{A}) - \E l_{A}(X) \right)
        \\
        &\quad\quad\lesssim M \left(m + \sqrt{\E \norm{X}^2}\right) \left(\frac{(\log N)^2}{1 - \alpha} \sqrt{\frac{k}{N}} + \sqrt{\frac{\ell}{(1 - \alpha) N}}\right) \nonumber.
    \end{align}
\end{Lemma}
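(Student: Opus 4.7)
The plan is to apply Lemma~\ref{lem:momuniform} to the class $\F_{M,m}^k$, reducing the proof to a uniform variance bound and a Rademacher-complexity bound. The variance is handled directly by~\eqref{eq:var_Mm}: $\sup_{A\in\A_{M,m}^k}\Var(l_A(X)) \lesssim M^2(m^2 + \E\norm{X}^2)$, which together with the factor $\sqrt{\ell/(\alpha N)}$ yields the second term on the right-hand side of~\eqref{eq:concentration_Mm}.

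For the Rademacher part, I would condition on $X_1,\dots,X_N$ and observe that the process $f \mapsto \tfrac{1}{\sqrt N}\sum_i \eps_i f(X_i)$ is subgaussian with respect to the $L_2(P_N)$ pseudometric, so Dudley's entropy integral applies:
\[
\E_\eps \sup_{f \in \F_{M,m}^k} \frac{1}{\sqrt N}\sum_{i=1}^N \eps_i f(X_i) \lesssim \int_0^{D} \sqrt{\log \mathcal{N}_{2}(\F_{M,m}^k, t, P_N)}\,dt,
\]
where the $L_2(P_N)$-diameter satisfies $D \lesssim B := M(m + \sqrt{P_N\norm{X}^2})$ by an empirical analogue of~\eqref{eq:var_Mm}. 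Plugging in the Hilbert-space covering bound~\eqref{eq:cov_number} makes the integrand comparable to $t^{-1}\sqrt{\mathcal{A}}\,\sqrt{\log(B/t)}$, with $\mathcal{A} := kM^2(m^2 + P_N\norm{X}^2)\log N$.

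The main obstacle is that this integrand is not integrable at zero. I would handle the divergence by truncating the chaining at $t_0 = B/N$ and controlling the residual separately. On $[t_0,D]$ the change of variable $s = \log(B/t)$ evaluates $\int_{t_0}^{D} t^{-1}\sqrt{\log(B/t)}\,dt \lesssim (\log N)^{3/2}$, producing a chained bound of order $\sqrt k\, M\sqrt{m^2 + P_N\norm{X}^2}\,(\log N)^2$. Below $t_0$, the Cauchy--Schwarz bound $|\sum_i \eps_i(f-g)(X_i)| \le N\norm{f-g}_{L_2(P_N)}$ shows that the residual contributes at most $B/\sqrt N$, which is dominated by the chained term. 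Taking expectation over $X$, using Jensen's inequality to replace $\sqrt{m^2 + P_N\norm{X}^2}$ by $m + \sqrt{\E\norm{X}^2}$ (up to a constant), and dividing by $\sqrt N$ delivers
\[
\E \sup_{A \in \A_{M,m}^k} \frac{1}{N}\sum_{i=1}^N \eps_i l_A(X_i) \lesssim M\bigl(m + \sqrt{\E\norm{X}^2}\bigr)(\log N)^2 \sqrt{k/N}.
\]

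Feeding the variance and Rademacher bounds into~\eqref{absvalueequation} of Lemma~\ref{lem:momuniform} at level $\alpha$ yields~\eqref{eq:concentration_Mm}; the upper-tail companion~\eqref{eq:concentration_Mm_uppertail} follows from~\eqref{absvalueequation_second} with $\alpha$ replaced by $1-\alpha$ by the identical argument.
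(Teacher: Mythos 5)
Your proposal is correct and follows essentially the same route as the paper: invoke Lemma~\ref{lem:momuniform} on $\F_{M,m}^k$, bound the variance via~\eqref{eq:var_Mm}, and control the Rademacher term by truncated Dudley chaining with the covering bound~\eqref{eq:cov_number}. The paper truncates at $\beta \simeq M\sigma_N\sqrt{k/N}$ (citing a chaining lemma from Srebro et al.\ that already absorbs the residual) rather than at $t_0 = B/N$ with a hand-made Cauchy--Schwarz residual bound, but both choices yield the same $(\log N)^2\sqrt{k/N}$ factor, and the remaining steps (Jensen to pass to $\E\|X\|^2$, symmetry for the upper tail) coincide.
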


\begin{proof}
    By Lemma~\ref{lem:momuniform}, with probability at least \(1 - e^{- \alpha^2 \ell / 2}\), it holds that
    \begin{align*}
        &\sup_{A \in \A_{M, m}^k} \left(\E l_{A}(X) - \QOM_{\alpha}(l_{A})\right)
        \\
        &\quad\quad\lesssim \E \sup_{A \in \A_{M, m}^k} \left(\frac{1}{\alpha N} \sum_{i = 1}^{N} \eps_i l_A(X_i)\right) + \sqrt{\sup_{A \in \A_{M, m}^k} \Var(l_A(X)) \frac{\ell}{\alpha N}}.
    \end{align*}
    
    Now we are going to bound the first term of the right-hand side for a fixed sample $X_1, \dots, X_N$ using Dudley's chaining argument. 
    It follows from~\eqref{eq:var_Mm} applied to the empirical distribution $P_N$ that
    \[
    \diam_2\left(\F_{M, m}^k, P_N\right) \le 10 M \sigma_N, \quad\text{where}\quad \sigma_N = m + \sqrt{P_N \norm{X}^2},
    \]
    thus, Dudley's chaining argument (e.g., Lemma~A.3 in \citep{srebro2010smoothness}) together with Lemma~\ref{lem:cov_number} ensure the following bound on the Rademacher averages of $l_A$ for any $\beta > 0$,
    \begin{align*}
        \E_{\eps} \sup_{A \in \A_{M, m}^k} \left(\frac{1}{N} \sum_{i = 1}^{N} \eps_i l_A(X_i)\right) 
        &\lesssim \beta + \frac{1}{\sqrt{N}} \int_\beta^{\diam_2\left(\F_{M, m}^k, P_N\right)} \sqrt{\log\mathcal{N}_2 \left(\F_{M, m}^k, t, P_N\right)} d t \\
        &\lesssim \beta + \frac{1}{\sqrt{N}} \int_\beta^{10 M \sigma_N} \frac{M \sigma_N}{t} \sqrt{k \log N \log \frac{M \sigma_N}{t}} d t \\
        &\lesssim \beta + M \sigma_N \sqrt{\frac{k \log N}{N}} \log^{3/2}\left(\frac{M \sigma_N}{\beta}\right)
    \end{align*}
    Further, choosing \(\beta = M \sigma_N \sqrt{\frac{k}{N}}\) we have
    \[
        \E_{\eps} \sup_{A \in \A_{M, m}^k} \left(\frac{1}{N} \sum_{i = 1}^{N} \eps_i l_A(X_i)\right) 
        \lesssim M \sigma_N \sqrt{\frac{k}{N} \log N \log^{3}\frac{N}{k}}
        \le M \sigma_N (\log N)^2 \sqrt{\frac{k}{N}}.
    \]
    Finally,~\eqref{eq:var_Mm} implies
    \[
    \Var(l_A(X)) \le \E l_A^2(X) \lesssim M^2 \left(m^2 + \E \norm{X}^2\right),
    \]
    thus we conclude that, with probability at least \(1 - e^{- \alpha^2 \ell / 2}\),
    \begin{align*}
        \sup_{A \in \A_{M, m}^k} \left(\E l_{A}(X) - \QOM_{\alpha}(l_{A})\right)
        & \lesssim \frac{\E M \sigma_N (\log N)^2}{\alpha} \sqrt{\frac{k}{N}} + \sqrt{\sup_{A \in \A_{M, m}^k} \Var(l_A(X)) \frac{\ell}{\alpha N}} \\
        & \lesssim M \left(m + \sqrt{\E \norm{X}^2}\right) \left(\frac{(\log N)^2}{\alpha} \sqrt{\frac{k}{N}} + \sqrt{\frac{\ell}{\alpha N}}\right).
    \end{align*}
Inequality \eqref{eq:concentration_Mm_uppertail} can be similarly derived from \eqref{absvalueequation_second}.
\end{proof}

We are now ready to prove Theorem~\ref{thm:pmin}.

\begin{proof}[Proof of Theorem~\ref{thm:pmin}]
    In order to finish the proof we need to combine several results.
    Let us fix some optimal quantizer \(A^*\), satisfying \(P(V_a) \ge \pmin\) for all \(a \in A^*\).
    We derive the bound on the union of the events below:
    \begin{itemize}
        \item by Chernoff's and the union bounds, it holds with probability at least \(1 - k e^{-N \pmin / 8}\) that for any \(a \in A^*\)
        \[
        P_N(V_a) \ge \frac{P(V_a)}{2} \ge \frac{\pmin}{2},
        \]
        hence \(A^*\) is in the set of possible solutions on this event;
        
        \item by Lemma~\ref{lem:optimizer_Mm}, with probability at least \( 1 - e^{- \ell / 12} - e^{- N \pmin / 12}\), we have \(\hat{A}_{\delta, \pmin} \in \A^k_{M, m}\) with 
        \(M = 10 \sqrt{\frac{\E \norm{X}^{2}}{\pmin}}\) and \(m = 4 \sqrt{2 \E \norm{X}^{2}}\).
        In addition, a similar property can be derived for \(A^*\). Indeed, if \(\min_{a \in A^*} \norm{a} > m\), then
        \begin{align*}
            \E l_{A^*}(X) &\ge \frac{m^2}{4} P\left(\norm{X} \le \frac{m}{2}\right) - \E \norm{X}^2
            \\
            &\ge 8 \E \norm{X}^{2} \left(1 - \frac{1}{8}\right) - \E \norm{X}^{2} 
            > 0 = \E l_{\{0\}}(X).
        \end{align*}
        This contradicts the optimality of \(A^*\).
        Now assume \(\min_{a \in A^*} \norm{a} \le m\), but \(\norm{a} > M\) for some \(a \in A^*\). Arguing as in the proof of Lemma~\ref{lem:optimizer_Mm}, we obtain
        \[
        P(V_a) \le P\left(\norm{X} > \frac{M - m}{2}\right) < \frac{4 \E \norm{X}^2}{(M - m)^2} \le \frac{\pmin}{4}.
        \]
        This contradicts the lower bound \(P(V_a) \ge \pmin\);
        
        \item by Lemma~\ref{lem:concentration_Mm}, taking $M$ and $m$ as above, 
        we have that, with probability at least \(1 - 2 e^{-\ell/8}\),
        \[
            \sup_{A \in A_{M, m}} \abs{\E l_{A} - \MOM(l_{A})} 
            \lesssim \E \norm{X}^2 \left((\log N)^2 \sqrt{\frac{k}{N \pmin}} + \sqrt{\frac{\ell}{N \pmin}}\right) \, .
        \]
    \end{itemize}
    All three assertions take place with probability at least \( 1 - 3 e^{-\ell / 12} - (k + 1) e^{-N \pmin / 12}\). Suppose for a moment that \( N \pmin \geq 12 \log \tfrac{2 (k + 1)}{\delta} \). Then, additionally, due to the choice \(\ell = 12 \left\lceil \log \tfrac{6}{\delta} \right\rceil + 1 \), we have that the total probability is at least \( 1 - \delta \).
    Since we know that on this event \(\hat{A}_{\delta, \pmin}, A^* \in A_{M, m} \) and \( \MOM\left(l_{\hat{A}_{\delta, \pmin}}\right) \leq \MOM(l_{A^*}) \), we have
    \begin{align*}
        D(\hat{A}_{\delta, \pmin}) - D(A^*) & = \E l_{\hat{A}_{\delta, \pmin}} - \E l_{A^{*}}\\ 
        & \le \E l_{\hat{A}_{\delta, \pmin}} - \MOM\left(l_{\hat{A}_{\delta, \pmin}}\right) - \E l_{A^*} + \MOM(l_{A^*})\\ 
        & \lesssim \E \norm{X}^2 \left((\log N)^2 \sqrt{\frac{k}{N \pmin}} + \sqrt{\frac{\log \frac{1}{\delta}}{N \pmin}}\right) \, .
    \end{align*}
    
    Finally, consider the case \( N \pmin < 12 \log \tfrac{2 (k + 1)}{\delta} \). According to~\eqref{eq:simple_bound} one has, with probability at least $1 - \delta$,
    \[
    D(\hat{A}_{\delta, \pmin}) - D(A^*) \lesssim \E \norm{X}^2 
    \lesssim \E \norm{X}^2 \left((\log N)^2 \sqrt{\frac{k}{N \pmin}} + \sqrt{\frac{\log \frac{1}{\delta}}{N \pmin}}\right) \, .
    \]
    The claim follows.
\end{proof}

Finally, we present an analog of Theorem~\ref{thm:pmin} in $\R^d$. We are able to completely remove the $(\log N)$-factor by making $d$ appear in the bound. First, we need the following simple result.

\begin{Corollary}
\label{lem:concentration_Mm_Rd}
    For any $\alpha \in (0, 1)$ with \(\ell \alpha\) being non-integer, we have, with probability at least \(1 - e^{- \alpha^2 \ell / 2}\),
    \[
        \sup_{A \in \A_{M, m}^k} \left(\E l_{A}(X) - \QOM_{\alpha}(l_{A})\right)
        \lesssim M \left(m + \sqrt{\E \norm{X}^2}\right) \left(\frac{1}{\alpha} \sqrt{\frac{k d}{N}} + \sqrt{\frac{\ell}{\alpha N}}\right).
    \]
    as well as, with probability at least \(1 - e^{- (1 - \alpha)^2 \ell / 2}\),
    \[
        \sup_{A \in \A_{M, m}^k} \left(\QOM_{\alpha}(l_{A}) - \E l_{A}(X) \right)
        \lesssim M \left(m + \sqrt{\E \norm{X}^2}\right) \left(\frac{1}{1 - \alpha} \sqrt{\frac{k d}{N}} + \sqrt{\frac{\ell}{(1 - \alpha) N}}\right).
    \]
\end{Corollary}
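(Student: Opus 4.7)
The plan is to mimic the proof of Lemma~\ref{lem:concentration_Mm} essentially verbatim, with the single substitution of the $\R^d$-specific covering number bound \eqref{eq:cov_number_Rd} from Lemma~\ref{lem:cov_number} in place of the Hilbert-space bound \eqref{eq:cov_number}. The starting point is again Lemma~\ref{lem:momuniform}, which reduces the control of $\sup_{A \in \A_{M,m}^k}(\E l_A - \QOM_\alpha(l_A))$ to bounding (i) the Rademacher complexity $\E \sup_{A \in \A_{M,m}^k} \frac{1}{N}\sum_{i=1}^N \eps_i l_A(X_i)$ and (ii) the variance proxy $\sup_{A \in \A_{M,m}^k}\Var(l_A(X))$. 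Item (ii) is unchanged: by \eqref{eq:var_Mm} of Lemma~\ref{lem:A_radius}, the supremum of variances is bounded by $4 M^2(m^2 + 6 \E \norm{X}^2)$, yielding the sub-Gaussian term $M(m+\sqrt{\E\norm{X}^2})\sqrt{\ell/(\alpha N)}$ in both tail directions.

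For item (i), fix the sample $X_1,\dots,X_N$ and set $\sigma_N = m + \sqrt{P_N \norm{X}^2}$, so that the empirical diameter satisfies $\diam_2(\F_{M,m}^k, P_N) \le 10 M \sigma_N$ by \eqref{eq:var_Mm} applied to $P_N$. Dudley's chaining (as used in the proof of Lemma~\ref{lem:concentration_Mm}) combined with \eqref{eq:cov_number_Rd} gives
\[
    \E_\eps \sup_{A \in \A_{M,m}^k} \frac{1}{N}\sum_{i=1}^N \eps_i l_A(X_i)
    \lesssim \frac{1}{\sqrt{N}} \int_0^{10 M \sigma_N} \sqrt{k d \log\frac{M\sigma_N}{t}} \, dt.
\]
The key observation, and what distinguishes this case from the infinite-dimensional one, is that the integrand $\sqrt{\log(M\sigma_N/t)}$ is integrable at $t=0$, so no lower cutoff $\beta$ is needed and the integral evaluates to at most a constant multiple of $M\sigma_N \sqrt{kd}$. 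Hence the conditional Rademacher complexity is bounded by $M \sigma_N \sqrt{kd/N}$ up to an absolute constant, and taking expectation together with Jensen's inequality $\E \sqrt{P_N \norm{X}^2} \le \sqrt{\E \norm{X}^2}$ yields $\E M \sigma_N \le M(m + \sqrt{\E\norm{X}^2})$.

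Putting these two ingredients together exactly as at the end of the proof of Lemma~\ref{lem:concentration_Mm} gives the lower-tail inequality. The upper-tail inequality is identical, using \eqref{absvalueequation_second} in place of \eqref{absvalueequation} and replacing $\alpha$ by $1-\alpha$ in the prefactors. There is really no substantive obstacle: the only subtlety worth flagging is the Dudley integral computation, where we must observe that the $\R^d$ covering bound contributes only a $\sqrt{\log(M\sigma_N/t)}$ factor (as opposed to the $\sqrt{\log N \log(M\sigma_N/t)^{3}}$-type factor in the general Hilbert case), and that this factor integrates to $O(M\sigma_N)$ near the origin, which is precisely what eliminates the $(\log N)^2$ that appeared in Lemma~\ref{lem:concentration_Mm}.
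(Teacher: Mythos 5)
Your proposal is correct and follows the paper's own proof essentially verbatim: apply Lemma~\ref{lem:momuniform}, bound the variance via~\eqref{eq:var_Mm}, and bound the Rademacher complexity by Dudley's integral using the $\R^d$ covering bound~\eqref{eq:cov_number_Rd}, noting that $\int_0^{10M\sigma_N}\sqrt{kd\log(M\sigma_N/t)}\,dt \lesssim M\sigma_N\sqrt{kd}$ converges without a lower cutoff. You have correctly identified that the integrability at $t=0$ is precisely what removes the $(\log N)^2$ factor present in the Hilbert-space case.
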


\begin{proof} 
    Using the Dudley integral argument again we have
    \begin{align*}
        \E_{\eps} \sup_{A \in \A_{M, m}^k} \left(\frac{1}{N} \sum_{i = 1}^{N} \eps_i l_A(X_i)\right) 
        &\lesssim \frac{1}{\sqrt{N}} \int_0^{\diam_2\left(\F_{M, m}^k, P_N\right)} \sqrt{\log\mathcal{N}_2 \left(\F_{M, m}^k, t, P_N\right)} d t \\
        &\lesssim \frac{1}{\sqrt{N}} \int_0^{10 M \sigma_N} \sqrt{k d \log\left(\frac{M \sigma_N}{t}\right)} d t \\
        &\lesssim M \sigma_N \sqrt{\frac{k d}{N}}.
    \end{align*}
    The rest of the proof is exactly the same as for Lemma~\ref{lem:concentration_Mm}.
\end{proof}

With this result in mind, we can immediately prove our second main result.

\begin{Theorem}
\label{thm:pmin_Rd}
    Consider the case of $\R^d$ with the Euclidean distance. Fix \(\delta \in (0, 1)\). Suppose, \(\min_{a \in A^{*}} P(V_a) \geq  \pmin \) for some optimal quantizer \(A^{*} \).
    The same estimator $\hat{A}_{\delta, \pmin}$ satisfies, with probability at least \( 1 - \delta\),
    \[
        D(\hat{A}_{\delta, \pmin}) - D(A^*) 
        \lesssim \E \norm{X - \mu}^{2} \sqrt{\frac{k d + \log \frac{1}{\delta}}{N \pmin}} \, .
    \]
    Moreover, if \(N \pmin \gtrsim d\log N + \log \frac{1}{\delta}\), then 
    \[
        D(\hat{A}_{\delta, \pmin}) - D(A^*) 
        \lesssim \sqrt{\Tr(\Sigma) \left(\lambda_{\max}(\Sigma) + \pmin \Tr(\Sigma)\right)} \sqrt{\frac{k d + \log \frac{1}{\delta}}{N \pmin}} \, ,
    \]
    where $\Sigma$ is the covariance matrix of $X$ and $\lambda_{\max}(\Sigma)$ is its largest eigenvalue.
\end{Theorem}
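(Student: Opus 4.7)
My plan is to follow the proof of Theorem~\ref{thm:pmin} verbatim for the first bound, substituting the $\R^d$ concentration estimate of Corollary~\ref{lem:concentration_Mm_Rd} for the Hilbert-space Lemma~\ref{lem:concentration_Mm}. By translation invariance I may assume $\mu=0$. The three union-bound ingredients from the proof of Theorem~\ref{thm:pmin}---the Chernoff bound showing $A^*$ is a feasible candidate (because $P_N(V_a) \ge \pmin/2$ for all $a\in A^*$), the localization of both $A^*$ and $\hat A_{\delta,\pmin}$ to $\A^k_{M,m}$ with $M = 10\sqrt{\Tr(\Sigma)/\pmin}$, $m = 4\sqrt{2\Tr(\Sigma)}$ via Lemma~\ref{lem:optimizer_Mm}, and the uniform concentration of MOM---all carry over to $\R^d$ unchanged. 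Corollary~\ref{lem:concentration_Mm_Rd} provides the sharper Rademacher term $M\sigma_N\sqrt{kd/N}$ with no $(\log N)^2$ factor, using the finite-dimensional covering estimate~\eqref{eq:cov_number_Rd}. Plugging $M(m + \sqrt{\Tr(\Sigma)}) \lesssim \Tr(\Sigma)/\sqrt{\pmin}$ then yields the first bound $\Tr(\Sigma)\sqrt{(kd + \log(1/\delta))/(N\pmin)}$. The degenerate regime $N\pmin \lesssim \log\frac{k+1}{\delta}$ is handled by the a~priori bound $D(\hat A_{\delta,\pmin}) \lesssim \Tr(\Sigma)$ of Remark~\ref{rem:simple_bound}, exactly as at the end of the proof of Theorem~\ref{thm:pmin}.

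Under the stronger assumption $N\pmin \gtrsim d\log N + \log(1/\delta)$, the refinement to the second bound rests on a sharper estimate of $\Var(l_A(X))$ for $A \in \A^k_{M,m}$. I would write $l_A(X) = \bar l_A(X) + r_A(X)$, where $\bar l_A(X) = -2\langle X, m_A\rangle + v_A$ is the affine surrogate built from the $P$-weighted centroid $m_A = \sum_a P(V_a)\,a$ and $v_A = \sum_a P(V_a)\norm{a}^2$, and $r_A(X) = l_A(X) - \bar l_A(X)$ is the (non-positive) fluctuation. The linear part benefits from the sharp bound
\[
\Var(\bar l_A) = 4\, m_A^\T \Sigma\, m_A \le 4\lambda_{\max}(\Sigma)\, \norm{m_A}^2 \le 4\lambda_{\max}(\Sigma) \sum_a P(V_a)\norm{a}^2 \lesssim \lambda_{\max}(\Sigma)\,\Tr(\Sigma),
\]
using Jensen's inequality together with Lemma~\ref{lem:A_radius}. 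For the fluctuation, the pointwise identity $\bar l_A - l_A = \sum_{a'} P(V_{a'})(\norm{X-a'}^2 - \norm{X-a_X}^2)$ combined with Lemma~\ref{lem:A_radius} and the constraint $P(V_a) \ge \pmin/2$ (which holds on the high-probability event for both $A^*$ and $\hat A_{\delta,\pmin}$) should yield a bound of the form $\E r_A(X)^2 \lesssim \Tr(\Sigma)^2$. Assembling these estimates and threading the result through the Dudley-chaining step of the proof of Corollary~\ref{lem:concentration_Mm_Rd} (measuring the relevant $L_2$ radius around $\bar l_A$ rather than at the origin) delivers the claimed rate $\sqrt{\Tr(\Sigma)(\lambda_{\max}(\Sigma) + \pmin\Tr(\Sigma))}\,\sqrt{(kd + \log(1/\delta))/(N\pmin)}$.

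The delicate step is establishing the $\Tr(\Sigma)^2$ bound on $\E r_A(X)^2$: the naive pointwise estimate $|r_A(X)| \le 2M(\norm{X} + M)$ only produces $\E r_A^2 \lesssim M^2(\Tr(\Sigma) + M^2) \simeq \Tr(\Sigma)^2/\pmin^2$, which is too large by a factor of $\pmin^{-2}$. Closing this gap will need both the mutual disjointness of the Voronoi cells---so that $\sum_a \E[XX^\T\Ind[X \in V_a]] = \Sigma$, and hence each conditional second-moment matrix is Loewner-dominated by $\Sigma$---and the $\pmin$-lower bound on $P(V_a)$, which converts the weighted inequality $\sum_a P(V_a)\norm{a}^2 \lesssim \Tr(\Sigma)$ of Lemma~\ref{lem:A_radius} into the unweighted control $\sum_a \norm{a}^2 \lesssim \Tr(\Sigma)/\pmin$. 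The assumption $N\pmin \gtrsim d\log N + \log(1/\delta)$ is precisely what ensures that the refined Rademacher term and the sub-Gaussian variance/tail term balance to give the stated rate, and it is also what lets one promote the empirical constraint $P_N(V_a) \ge \pmin/2$ to the population statement $P(V_a) \gtrsim \pmin$ via Chernoff with the required confidence.
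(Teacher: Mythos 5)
Your treatment of the first bound is correct and matches the paper's: run the proof of Theorem~\ref{thm:pmin} verbatim with Corollary~\ref{lem:concentration_Mm_Rd} in place of Lemma~\ref{lem:concentration_Mm}, and handle the regime $N\pmin \lesssim \log\frac{k+1}{\delta}$ by the a~priori bound of Remark~\ref{rem:simple_bound}.

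For the second bound, you take a genuinely different route and it has a gap. The paper's mechanism is to \emph{sharpen $M$}: if $a \in A$ has $\norm{a} > M$ and $b \in A$ has $\norm{b} \le m$, then $x \in V_a$ forces the half-space event $2\langle x, a-b\rangle \ge \norm{a}^2 - \norm{b}^2$, whose population probability is $< \pmin/4$ by Chebyshev applied to the one-dimensional marginal $\langle X, a-b\rangle$ (this is precisely where $\lambda_{\max}(\Sigma)$, rather than $\Tr(\Sigma)$, enters); then a relative-deviation VC bound over the VC-class of half-spaces (VC dimension $d+1$, $\log\mathbb{S}_{2N} \lesssim d \log N$) gives $P_N(V_a) < \pmin/2$ uniformly, so $\hat A_{\delta,\pmin}$ and $A^*$ land in $\A^k_{M,m}$ with $M \simeq \sqrt{\lambda_{\max}(\Sigma)/\pmin} \lor \sqrt{\Tr(\Sigma)}$. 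The hypothesis $N\pmin \gtrsim d\log N + \log\frac{1}{\delta}$ is exactly what is needed for this VC step, not merely to promote an empirical constraint on $P_N(V_a)$ to a population one. Once $M$ is sharpened, Corollary~\ref{lem:concentration_Mm_Rd} already delivers the claimed rate with no further variance surgery.

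Your decomposition $l_A = \bar l_A + r_A$ aims to sharpen only the variance term, and even granting the conjectured $\E r_A^2 \lesssim \Tr(\Sigma)^2$ (which you acknowledge is not established, and which looks hard: $\sum_a \norm{a - m_A}^2 \E[\norm{X}^2\Ind[X\in V_a]]$ has no reason to be $\lesssim \Tr(\Sigma)^2$ when the heavy-tailed cell and the far-out center coincide), this still leaves the chaining term untouched. Corollary~\ref{lem:concentration_Mm_Rd} bounds the Rademacher average by $M\sigma_N\sqrt{kd/N}$, and the $L_2(P_N)$-diameter of $\F^k_{M,m}$ and the covering estimate $kd\log(M\sigma_N/t)$ are both genuinely of order $M\sigma_N$; recentering each $l_A$ at its affine surrogate $\bar l_A$ does not shrink the diameter of the class. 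With $M \simeq \sqrt{\Tr(\Sigma)/\pmin}$ the chaining term is $\simeq \Tr(\Sigma)\sqrt{kd/(N\pmin)}$, which exceeds $\sqrt{\Tr(\Sigma)(\lambda_{\max}(\Sigma)+\pmin\Tr(\Sigma))}\sqrt{kd/(N\pmin)}$ whenever $\lambda_{\max}(\Sigma) \ll \Tr(\Sigma)$ and $\pmin \ll 1$. So without the VC-based sharpening of $M$ the argument cannot close, and the missing ingredient is the reduction to a uniform bound over half-spaces.
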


\begin{proof}
    The proof of the first statement repeats the same lines of the proof of Theorem~\ref{thm:pmin}. The only difference is that the bound of Lemma~\ref{lem:concentration_Mm} is replaced by the bound of Corollary~\ref{lem:concentration_Mm_Rd}.
    
    We proceed with the proof of the second inequality. In what follows, we only emphasize the differences with the proof of Theorem~\ref{thm:pmin}. 
    Our idea is to use the Vapnik--Chervonenkis type argument to provide a slightly sharper upper bound on $M$.
    Recall that according to Lemma~\ref{lem:optimizer_Mm}, with probability at least \( 1 - e^{-\ell/12}\), it holds that 
    \[
    \min_{a \in \hat{A}_{\delta, \pmin}} \norm{a} \le m = 4 \sqrt{2 \E \norm{X}^{2}}
    \] 
    and, with total probability $1 - \left(e^{- \ell / 12} + e^{- N \pmin / 12}\right)$, we have 
    \[
    \max_{a \in \hat{A}_{\delta, \pmin}} \norm{a} \le 10 \sqrt{\frac{\E \norm{X}^2}{\pmin}}.
    \]
    Now, we are going to show that, with probability $1 - \left(e^{- \ell / 12} + 4 e^{- N \pmin / 100}\right)$, \(\hat{A}_{\delta, \pmin} \in \A^k_{M, m}\), where \(M = 2 m \lor 8 \sqrt{\frac{\lambda_{\max}(\Sigma)}{\pmin}}\), provided that $N \pmin \gtrsim d \log N$. 
    Consider any $A \in \A^k$ such that there is $b \in A$ with $\norm{b} \le m$. Notice that $x \in V_a$ for $a \in A$ implies
    \begin{equation}
    \label{eq:condition}
        2 \langle x, a - b \rangle \ge \norm{a}^2 - \norm{b}^2.
    \end{equation}
    Using \(\E\left(\langle X, a - b \rangle\right)^2 \le \lambda_{\max}(\Sigma) \norm{a - b}^2\) and assuming that $\norm{a} > M$, we have by Chebyshev's inequality
    \begin{equation}
    \label{eq:pva}
        P\left(2 \langle X, a - b \rangle \ge \norm{a}^2 - \norm{b}^2\right) 
        \le \frac{4 \lambda_{\max}(\Sigma) \norm{a - b}^2}{(\norm{a}^2 - \norm{b}^2)^2} < \frac{16 \lambda_{\max}(\Sigma)}{M^2} \le \frac{\pmin}{4},
    \end{equation}
    where we additionally used
    \[
    \norm{a}^2 - \norm{b}^2 = \left(\norm{a} - \norm{b}\right) \left(\norm{a} + \norm{b}\right) > \frac{M \norm{a - b}}{2}.
    \]
    Observe that \eqref{eq:condition} implies
    \[
    P_N(V_a) \le P_N\left(2 \langle X, a - b \rangle \ge \norm{a}^2 - \norm{b}^2\right).
    \]
    Applying Theorem~5.1 in \citep*{boucheron2005theory} to the class induced by half-spaces \( \FF = \left\{ \Ind[\langle X, u \rangle \ge t] \;:\; u \in \R^{d}, t \in \R\right\}\), we get that simultaneously for all \(u \in \R^{d} \) and \( t \in \R\), with probability at least \(1 - 4 e^{- N \pmin / 100}\),
    \begin{align*}
        &P_N(\langle X, u \rangle \ge t) - P(\langle X, u \rangle \ge t) 
        \\
        &\quad\quad\le 2 \sqrt{P(\langle X, u \rangle \ge t) \frac{\log\mathbb{S}_{2N}(\FF) + N \pmin / 100}{N}} + 4 \frac{\log\mathbb{S}_{2N}(\FF) + N \pmin / 100}{N},
    \end{align*}
    where \( \mathbb{S}_{N}(\FF) \) denotes the shatter-coefficient of the class \( \FF \) (see the definition in \cite{boucheron2005theory}). By the \citeauthor{Vapnik74} lemma and since the VC-dimension of \( \FF \) is known to be $d + 1$, we have \( \log \mathbb{S}_{2N}(\FF) \lesssim d \log N\). Due to \eqref{eq:pva} and provided that \( N \pmin \ge 100 \log \mathbb{S}_{2N}(\FF)\), with probability at least \( 1 - 4 e^{- N \pmin / 100}\), we have 
    \[
    P_N\left(2 \langle X, a - b \rangle \ge \norm{a}^2 - \norm{b}^2\right) < \frac{\pmin}{2}.
    \]
    Therefore, we obtain a contradiction with the definition of $\hat{A}_{\delta, \pmin}$ and prove that, with probability at least \(1 - \left(e^{- \ell / 12} + 4 e^{- N \pmin / 100}\right)\), we have the inclusion \(\hat{A}_{\delta, \pmin} \in \A^k_{M, m}\). Similarly, \(A^* \in \A^k_{M, m}\). 
    Since \(N \gtrsim \frac{1}{\pmin} \ge k\), we have \( N \pmin \gtrsim d \log N + \log \frac{1}{\delta} \gtrsim \log \frac{k}{\delta}\).
    Then the remainder of the proof repeats the lines of the proof of Theorem~\ref{thm:pmin} in the regime \(N \pmin \gtrsim \log \tfrac {k}{\delta}\) with Lemma~\ref{lem:concentration_Mm} replaced by Corollary~\ref{lem:concentration_Mm_Rd}.
    The claim follows.
\end{proof}

Finally, we compare the results obtained in this section with the results from the previous section, were we study quantizers with known magnitude \( M \). First, we notice that the leading term \(M^2 + M \sqrt{\E \norm{X}^2}\) in Theorem~\ref{thm:simpleupperm} is replaced in~\eqref{eq:concentration_Mm} by a much better term \(M m + M \sqrt{\E \norm{X}^2}\). Indeed, in view of Lemma~\ref{lem:optimizer_Mm}, we are interested in the regime \(m \lesssim \sqrt{\E \norm{X}^{2}}\) and \( M \lesssim \sqrt{{\E \norm{X}^{2}}/{\pmin}} \). Therefore, the new bound allows us to obtain a better dependence on $\pmin$ since we may otherwise get an additional factor $\frac{1}{\sqrt{\pmin}}$.

Second, similar results can be obtained in the case where $M$ is known instead of $\pmin$, i.e., in the setting of Section~\ref{sec:warmup}. Namely, applying Lemma~\ref{lem:concentration_Mm} and the first part of Lemma~\ref{lem:optimizer_Mm} to the estimator of Theorem~\ref{thm:simpleupperm} we conclude that (see also the details of the proof of Theorem~\ref{thm:general_rate} below), with probability at least \(1 - \delta\), it holds that
\[
    D(\hat{A}_{\delta, M}) - D(A^*) \lesssim M \sqrt{\E \norm{X}^{2}} \left((\log N)^2 \sqrt{\frac{k}{N}} + \sqrt{\frac{\log \frac{1}{\delta}}{N}}\right) \, .
\]
In the case of $\R^d$, we obtain respectively that, with probability at least \( 1 - \delta\), it holds that
\[
D(\hat{A}_{\delta, M}) - D(A^*) \lesssim M \sqrt{\E \norm{X}^{2}} \left(\sqrt{\frac{k d}{N}} + \sqrt{\frac{\log \frac{1}{\delta}}{N}}\right) \, .
\]


\subsection{A lower bound with \texorpdfstring{\(\pmin\)}{pmin}}
\label{sec:lowerbound}

Here we study the question of the optimality of Theorem~\ref{thm:pmin} and Theorem~\ref{thm:pmin_Rd}. The lower bounds for the excess distortion appeared first in \citep{bartlett1998minimax} for the bounded case ($\norm{X} \le 1$ almost surely), where they showed a lower bound of order \( \Omega\left(\sqrt{\frac{k^{1 - 4 / d}}{N}}\right) \). Furthermore, \cite{linder2002learning} recovers this bound for constant \( d \) and \( k \geq 3 \), while \cite{antos2005improved} shows the same bound for \( k = 2 \) but only for empirically optimal quantizers. 
Below, we focus on how the mass of the lightest cluster affects the excess distortion in the unbounded case. We extend the construction of \cite{linder2002learning} to derive a bound that confirms that the dependence on \( \pmin \), $N$, and $\E \norm{X - \mu}^2$ in Theorem~\ref{thm:pmin_Rd} is sharp in some cases. The optimal dependence on the remaining parameters remains open.

Fix \( k = 4 \) and \( d = 1 \). Consider a class of probability measures on \( \R\), 
\[
    \mathcal{P}(\pmin, \sigma) = \left\{ P \;:\; \E X^{2} \leq \sigma^{2} \ \text{and}\; \exists A^* \in \argmin D(A, P) \ \text{s.t.}\ \min_{a \in A^*} P(V_{a}) \geq \pmin \right\},
\]
i.e., the probability measures that have an optimal quantizer based on \(k \) points such that the probability of \( X \) falling into each Voronoi cell under \( P \) is at least \( \pmin \). Theorem~\ref{thm:pmin_Rd} implies that for \( k = 4 \) and \( d = 1 \) there is an estimator \( \hat{A}_N \) based on the i.i.d.\ sample \( X_1, \dots, X_N \), such that for any \( P \in \mathcal{P}(\pmin, \sigma) \), we have with probability at least (say) $0.99$,
\[
    D(\hat{A}_N, P) - D(A^{*}, P) \lesssim \sigma^2 \sqrt{\frac{1}{N \pmin}},
\]
where the probability of the event is measured with respect to the joint distribution \( \P = P^{\otimes N} \).
The following result shows that when $d$, $k$, and $\delta$ are constants, the result is sharp up to a constant factor.

\begin{Theorem}\label{pmin_lowerbound}
    Under the notation introduced above let \( \sigma > 0 \), \( \pmin \leq {1}/{10}\). Let also \(N \pmin > 1/8\). Then, for any empirically designed quantizer \( \hat{A}_N \) there is a distribution \( P \in \mathcal{P}(\pmin, \sigma) \), such that, with probability at least $\frac{1}{4}$,
    \[
        D(\hat{A}_N, P) - D(A^{*}, P) \geq \frac{\sigma^{2}}{60}\sqrt{\frac{1}{N \pmin}}.
    \]
\end{Theorem}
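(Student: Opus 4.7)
The plan is to run Le Cam's two-point method: exhibit two distributions $P_+, P_- \in \mathcal{P}(\pmin, \sigma)$ on $\R$ whose product laws satisfy $\mathrm{TV}(P_+^{\otimes N}, P_-^{\otimes N}) \le 1/2$ while any single $4$-point quantizer $\hat A$ must suffer excess distortion at least $G = \sigma^2/(30 \sqrt{N \pmin})$ under at least one of them. The standard covering argument below then gives $\max_\theta \P_\theta(\text{excess} \ge G/2) \ge (1 - \mathrm{TV}(P_+^{\otimes N}, P_-^{\otimes N}))/2 \ge 1/4$, which is the claim with $G/2 = \sigma^2/(60 \sqrt{N \pmin})$. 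For the construction, fix $L = \sigma/\sqrt{\pmin}$ and a length scale $\eta$ (morally $\eta \asymp \sigma \pmin^{1/4} N^{-1/4}$). Put three background atoms at $\{-\eta, 0, \eta\}$ with mass $(1-\pmin)/3$ each (feasible since $\pmin \le 1/10$) and an outlier atom of mass $\pmin$ at $+L$ under $P_+$ and at $-L$ under $P_-$. Both distributions lie in $\mathcal{P}(\pmin, \sigma)$: $\E X^2 \le 2\eta^2/3 + \pmin L^2$, close to $\sigma^2$ once $\eta$ is small, and the unique optimal quantizer $A^*_\theta = \{-\eta, 0, \eta, \theta L\}$ has all four cells of mass $\ge \pmin$ and zero distortion.

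\textbf{Step 1 (testing).} The coupling in which all non-outlier samples agree under $P_\pm$ gives
\[ \mathrm{TV}(P_+^{\otimes N}, P_-^{\otimes N}) \le 1 - (1 - \pmin)^N, \]
which is $\le 1/2$ for $N\pmin$ below a small constant, covering the range $N \pmin \in (1/8, \log 2]$. For larger $N \pmin$ the outlier atom is replaced by a two-atom micro-cluster $\{L - r, L + r\}$ of total mass $\pmin$ whose interior weights $(1 \pm \alpha) \pmin / 2$ swap between $P_\pm$; a direct computation gives $\mathrm{KL}(P_+ \Vert P_-) \le 3 \pmin \alpha^2$, and then Pinsker bounds $\mathrm{TV}(P_+^{\otimes N}, P_-^{\otimes N}) \le 1/2$ whenever $\alpha^2 \le 1/(6 N \pmin)$.

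\textbf{Step 2 (distortion gap).} Define $B_\theta = \{\hat A : D(\hat A, P_\theta) - D(A^*_\theta, P_\theta) \ge G/2\}$. The deterministic claim is that every $4$-point quantizer lies in $B_+ \cup B_-$. If $\hat A$ has no center within $L/2$ of $+L$, then the $+L$ atom of $P_+$ contributes excess at least $\pmin (L/2)^2 = \sigma^2/4 \ge G/2$ (using $N \pmin > 1/8$), placing $\hat A \in B_+$; symmetrically for $-L$. Otherwise $\hat A$ spends two of its four centers near $\pm L$, leaving only two centers for the three background atoms; the best $2$-means of $\{-\eta, 0, \eta\}$ has cost $(1-\pmin) \eta^2 / 6$, and we tune $\eta$ so this equals $G/2$, placing $\hat A$ in $B_+ \cap B_-$. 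A quick consistency check $\eta \ll L$ follows from $\pmin \le 1/10$ and $N \pmin \ge 1/8$.

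\textbf{Step 3 (Le Cam) and main obstacle.} Since $B_+ \cup B_-$ exhausts every $4$-point quantizer,
\[ \P_+(B_+) + \P_-(B_-) \ge \P_+(B_+ \cup B_-) - \mathrm{TV}(P_+^{\otimes N}, P_-^{\otimes N}) \ge 1 - 1/2 = 1/2, \]
so $\max_\theta \P_\theta(B_\theta) \ge 1/4$, which by definition of $B_\theta$ gives the claimed $\sigma^2/(60 \sqrt{N \pmin})$ excess distortion with probability at least $1/4$. The main technical burden is Step 2: one must exhaust all $4$-point configurations of $\hat A$ (including ``in-between'' placements where centers lie partway between the background and $\pm L$) and verify that no such placement escapes both cases, while simultaneously tracking constants sharply enough to obtain the explicit prefactor $1/60$. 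A secondary complication is that the point-mass construction of Step 1 only handles small $N \pmin$, so the two-atom micro-cluster variant must be propagated through Step 2 as well, where the optimal quantizer no longer has zero distortion and the case analysis is heavier.
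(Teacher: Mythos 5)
Your high-level strategy---a two-point Le Cam reduction on the real line---is the right one and the same as the paper's, but your distributional construction is genuinely different, and this is where the argument breaks. The paper uses a single family $P_{p,\pm\delta}$ supported on five points $\{0,\pm\tfrac12 p^{-1/2},\pm p^{-1/2}\}$ with a mass tilt $\delta$ between the positive and negative pair; any four-point quantizer must make a \emph{discrete} choice about which pair to merge into one cell, and merging the heavy pair costs an extra $\Theta(\delta)$ (linear in $\delta$). You instead use four background atoms plus a single outlier whose location (or, in the micro-cluster variant, internal weighting) changes between $P_\pm$.

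The gap is in your micro-cluster patch for the regime $N\pmin$ bounded away from zero, which is the main regime the theorem covers (the point-mass version only handles $N\pmin \lesssim \log 2$, as you note). With atoms at $L\pm r$ carrying masses $(1\mp\alpha)\pmin/2$ and $(1\pm\alpha)\pmin/2$, the distortion of a one-center cover at position $L+s$ is $\pmin\bigl(r^2(1-\alpha^2)+(s-r\alpha)^2\bigr)$ under $P_+$ and $\pmin\bigl(r^2(1-\alpha^2)+(s+r\alpha)^2\bigr)$ under $P_-$. Under both, the optimal one-center distortion is the same value $\pmin r^2(1-\alpha^2)$, and the ``safe'' center $s=0$ (i.e., at $L$) incurs excess exactly $\pmin r^2\alpha^2$ under both laws. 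With the second-moment budget forcing $\pmin r^2 \lesssim \sigma^2$ and Pinsker forcing $\alpha^2 \lesssim 1/(N\pmin)$, this safe choice has excess $O(\sigma^2/(N\pmin))$, which is smaller than the target $\sigma^2/(60\sqrt{N\pmin})$ once $N\pmin$ exceeds a constant. So $B_+\cup B_-$ does \emph{not} exhaust all quantizers, and Step 3 collapses. The underlying difference: in your construction the mass tilt $\alpha$ only shifts a centroid continuously, so the penalty for a confidence-agnostic choice is $O(\alpha^2)$; in the paper's construction the tilt $\delta$ changes which combinatorial partition is optimal, so the penalty is $\Theta(\delta)$. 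Any fix will need to re-introduce a discrete asymmetry of that kind (two non-negligible atoms per side, each of mass $\asymp \pmin$, separated by $\asymp 1/\sqrt{\pmin}$), at which point you essentially arrive at the paper's five-point distribution. The small-$N\pmin$ branch of your argument is plausible but also needs the second-moment accounting done carefully (with $\pmin L^2 = \sigma^2$ and positive $\eta$ you already exceed $\sigma^2$, so $L$ must be shrunk, and one must recheck $\eta < L/2$ and the $\sigma^2/4 \ge G/2$ threshold with the shrunken $L$).
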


Let us first present a heuristic argument showing the validity of Theorem~\ref{pmin_lowerbound}. Slightly abusing the notation for \( p \in (0, 1/2) \), \(\delta \in (-1/2, 1/2) \) consider a distribution \( P_{p, \delta} \) supported on five points 
\begin{align*}
    P_{p, \delta}(X = - \tfrac{1}{2} p^{-1/2}) &= P_{p, \delta}(X = - p^{-1/2}) = \frac{p(1 - \delta)}{4},
    \qquad
    P_{p, \delta}(X = 0)= 1 - p, \\
    P_{p, \delta}(X = \tfrac{1}{2} p^{-1/2}) &= P_{p, \delta}(X = p^{-1/2}) = \frac{p(1 + \delta)}{4}.
\end{align*}
We have \( \E X^{2} = 5/8  \). Obviously, we can rescale these values, so it is enough to consider the case \( \sigma^{2} = 5/8\). It is easy to see that for \( \delta > 0 \) the optimal quantizer is \( A^{*} = (0, \tfrac{1}{2} p^{-1/2}, p^{-1/2}, -\tfrac{3}{4} p^{-1/2}) \) with the distortion
\[
    D(A^*, P_{p, \delta}) = \frac{p(1 - \delta)}{2} \left( \frac{p^{-1/2}}{4}\right)^{2}   = \frac{1 - \delta}{32} .
\]
For \( \delta = \frac{1}{\sqrt{Np}} \), the number of points on the negative side is greater, with constant probability (see p. 27 in \citep{linder2002learning}). In such a case, the empirically optimal quantizer must be \( \hat{A} = (0, - p^{-1/2}, -\tfrac{1}{2} p^{-1/2}, (\frac{1}{2} + a)p^{-1/2}) \), where \( a \) is some value between \( 0\) and \( \tfrac{1}{2} \). Thus, the distortion of any empirically optimal quantizer is at least
\[
    D(\hat{A}, P_{p, \delta}) \geq \frac{p(1 + \delta)}{2} \left(\frac{p^{-1/2}}{4}\right)^{2}   = \frac{1 + \delta}{32},
\]
which implies
\[
    D(\hat{A}, P_{p, \delta}) - D(A^{*}, P_{p, \delta}) \geq \frac{\delta}{16} = \frac{1}{16} \frac{1}{\sqrt{Np}} \,.
\]
However, this only touches the empirically optimal quantizer. The proof of the lower bound relies on a standard reduction to hypothesis testing.

\begin{proof}[Proof of Theorem~\ref{pmin_lowerbound}]
    As pointed out above, we can fix \( \sigma^{2} = 5/8 \) without loss of generality. Set \( p = 4 \pmin \leq 2/5 \). Then we have that \( P_{p, \delta}, P_{p, -\delta} \in \mathcal{P}( \pmin, 5/8) \). Denote, \( P_1 = P_{p, \delta} \) and \( P_2 = P_{p, -\delta} \). The Kullback-Leibler divergence between the two satisfies for $\delta \le \frac{1}{2}$,
    \[
        \mathrm{KL}(P_1, P_2) = p \delta \log\frac{1 + \delta}{1 - \delta} \leq 4 p \delta^{2} .
    \]
    Using Pinsker's inequality and the additivity of the KL-divergence for product measures (see \citep{boucheron2013concentration}) we have
    \[
        \mathrm{TV}\left(P_1^{\otimes N}, P_{2}^{\otimes N}\right) \leq \sqrt{\frac{N}{2} \mathrm{KL}(P_1, P_2)}
        \leq \sqrt{2 N p \delta^{2}} = 1/2,
    \]
    where we choose \( \delta = 1 / \sqrt{8 N p} \) (the condition \( N \pmin > 1/8 \) ensures that \( \delta < 1/2\)). Below, we only consider the distributions \( P \in \{P_1, P_2\} \) instead of the whole class \( \mathcal{P}\left(\pmin, \sqrt{{5}/{8}}\right) \). Consider an empirical quantizer denoted as \( \hat{A}_{N} = \hat{A}_N(X_1, \dots, X_N)\) that takes only the values \( \{A_1, A_2\} \), where 
    \[
    A_1 = (-\tfrac{3}{4} p^{-1/2}, 0, \tfrac{1}{2} p^{-1/2}, p^{-1/2} ) \quad \text{and} \quad  A_2 = ( -p^{-1/2}, -\tfrac{1}{2} p^{-1/2}, 0, \tfrac{3}{4} p^{-1/2} ).
    \]
    Let \( \Omega_{1} \subset \R^{N} \) be the set such that \( \hat{A}_N = A_1 \) on it, and \( \hat{A}_N = A_2 \) outside of $\Omega_{1}$. Since \( \mathrm{TV}\left(P_1^{\otimes N}, P_2^{\otimes N}\right) \leq 1/2 \), we have
    \begin{align*}
        \max_{j = 1,2} P_j^{\otimes N}\left\{\hat{A}_N \neq A_j\right\} &= \max\left(1 - P_1^{\otimes N}(\Omega_1), P_2^{\otimes N}(\Omega_1)\right) 
        \\
        &\geq \max\left(1/2 - P_2^{\otimes N}(\Omega_1), P_2^{\otimes N}(\Omega_1)\right) \geq 1/4 .
    \end{align*}
    Notice that under \( \P_{j} \) the event \( \hat{A}_N \neq A_j \) corresponds to the distortion
    \[
        D(A_1, P_2) = D(A_2, P_1) = \frac{p (1 + \delta)}{2} \left(\frac{p^{-1/2}}{4}\right)^{2} = \frac{1 + \delta}{32},
    \]
    whereas the minimal distortion is 
    \[ 
        D(A_j, P_j) = \frac{p(1 - \delta)}{2} \left(\frac{p^{-1/2}}{4}\right)^{2} = \frac{1 - \delta}{32} \, .
    \]
    Since \( \delta = 1 / \sqrt{8 N p} \), \( p = 4 \pmin \), and \( \sigma^2 = 5/8\), with probability at least \(1/4\), we have 
    \[
        D(\hat{A}_N, P_j) - D(A_j, P_j) = \frac{\delta}{16} = \frac{1}{16 \sqrt{8 N p}} > \frac{\sigma^{2}}{60 \sqrt{N \pmin}} \, .
    \]

    It remains to show why only \( \hat{A}_N \in \{A_1, A_2\} \) matters. For an arbitrary \( \hat{A}_N \), the corresponding Voronoi cells could be one of the following:
    \begin{enumerate}
        \item \( \bigl\{\{ -p^{-1/2}, -\tfrac{1}{2}p^{-1/2}\}, \{0\}, \{\tfrac{1}{2}p^{-1/2}\}, \{p^{-1/2}\}\bigr\}, \)
        \item \( \bigl\{\{ -p^{-1/2} \}, \{-\tfrac{1}{2}p^{-1/2}, 0\}, \{\tfrac{1}{2}p^{-1/2}\}, \{p^{-1/2}\}\bigr\}, \)
        \item \( \bigl\{\{ -p^{-1/2} \}, \{-\tfrac{1}{2}p^{-1/2}\}, \{0, \tfrac{1}{2}p^{-1/2}\}, \{p^{-1/2}\}\bigr\}, \)
        \item \( \bigl\{\{ -p^{-1/2}\}, \{-\tfrac{1}{2}p^{-1/2}\}, \{0\}, \{\tfrac{1}{2}p^{-1/2}, p^{-1/2}\}\bigr\} \).
    \end{enumerate}
    Denote by \( \tilde{A}_{N} \) an empirical quantizer such that it equals to \(A_1\) in the cases~1. and~2., and equals to \(A_2\) in the cases~3. and~4. Let us show case by case, that the distortion of \( \tilde{A}_N \) is smaller under either measure.
    \begin{enumerate}
        \item This case is trivial: by the centroid condition under both measures the optimal center for the cluster \( \{ -p^{-1/2}, -\tfrac{1}{2}p^{-1/2} \} \) is \( -\tfrac{3}{4} p^{-1/2} \), which corresponds to \( A_1 \).
        \item It is easy to calculate that the minimal distortion of a cluster on two points \( a, b \) with probabilities \(q, r\), respectively, is \( (a - b)^{2} \tfrac{qr}{q + r} \). Therefore, using only the distortion on \( \{-\tfrac{1}{2}p^{-1/2}, 0\} \),
        \[
            D(\hat{A}_N, P_1) \geq \frac{1}{4p} \frac{p(1 - \delta) (1 - p)}{p(1 - \delta) + 4(1 - p)} > \frac{1 - \delta}{32} = D(A_1, P_1),
        \]
        where the second inequality follows from \( p \leq 2/5\). Using additionally \( \delta < 1\), we have
        as well
        \[
            D(\hat{A}_N, P_2) \geq \frac{1}{4p} \frac{p(1 + \delta) (1 - p)}{p(1 + \delta) + 4(1 - p)} >
            \frac{1 + \delta}{32} = D(A_1, P_2) \, .
        \]
    \end{enumerate}
    Due to the symmetry, case~3. is similar to case~2. and case~4. is similar to case~1. We conclude that we always have \( D(\hat{A}_N, P_j) \geq D(\tilde{A}_N, P_j) \) for both \(j = 1, 2 \).
\end{proof}

\section{Unknown Parameters of Distributions}
\label{sec:Generalcase}

In this section we show that a convergence rate similar to one in Theorem~\ref{thm:simpleupperm} and Theorem~\ref{thm:pmin} holds without any prior knowledge on $M$ or $\pmin$. Our motivation is that in practice we may not have any information about the underlying distribution $P$. We show that even in this case the sub-Gaussian excess distortion bounds are possible. However, as a result, our bounds become more sensitive to some specific properties of $P$. The following theorem is the main result of this section.

\begin{Theorem}
\label{thm:general_rate}
    Fix $\delta \in (0, 1)$.
    There is an estimator $\hat{A}_{\delta}$ depending on $\delta$ such that, with probability at least $1 - \delta$,
    \[
    D(\hat{A}_\delta) - D(A^*) \lesssim R\; \sqrt{\E \norm{X - \mu}^2} \left((\log N)^2 \sqrt{\frac{k}{N}} + \sqrt{\frac{\log\frac{1}{\delta}}{N}}\right),
    \]
    where $R$ is such that
    \begin{equation}\label{eq:R}
        \E \norm{X - \mu}^2 \Ind[\norm{X - \mu} > R] \le \frac{\Delta}{64},
    \end{equation}
    and
    \begin{equation}\label{def:Delta}
        \Delta = \inf_{A \in \A^{k-1}} D(A) - \inf_{A \in \A^k} D(A).
    \end{equation}
\end{Theorem}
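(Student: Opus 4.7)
The estimator will be
\[
    \hat{A}_\delta \in \argmin_{A \in \A^k} \MOM(l_A),
\]
with $\ell \asymp \log(1/\delta)$ blocks, i.e.\ the same recipe as in Theorem~\ref{thm:pmin} but without any constraint on the empirical Voronoi masses. By translation invariance I may assume $\mu = 0$ throughout; write $m = 4\sqrt{2 \E \norm{X}^{2}}$ and $M = 2(R + m)$. The plan is to apply Lemma~\ref{lem:concentration_Mm} to the restricted class $\A_{M, m}^{k}$, for which we must argue that both $A^{*}$ and $\hat{A}_\delta$ lie in $\A_{M, m}^{k}$ with high probability.

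For $A^{*}$ the bound $\min_{a \in A^{*}} \norm{a} \leq m$ is the population analogue of the first step of Lemma~\ref{lem:optimizer_Mm}, obtained from $\E l_{A^{*}} \leq \E l_{\{0\}} = 0$. The upper bound $\max_{a \in A^{*}} \norm{a} \leq M$ is where \eqref{eq:R} and the gap $\Delta$ come in: if some $a^{*} \in A^{*}$ has $\norm{a^{*}} > M$ and $b \in A^{*}$ satisfies $\norm{b} \leq m$, then the Voronoi inequality $\norm{X - a^{*}} \leq \norm{X - b}$ on $V_{a^{*}}$ forces $\norm{X} \geq (\norm{a^{*}} - m)/2 \geq R$, and hence
\[
    D(A^{*} \setminus \{a^{*}\}) - D(A^{*}) \leq \E \Ind[X \in V_{a^{*}}] \norm{X - b}^{2} \leq 2 \E \Ind[\norm{X} > R] \norm{X}^{2} + 2 m^{2} P(\norm{X} > R) \leq \frac{\Delta}{16},
\]
using \eqref{eq:R} for the first summand and Markov's inequality for the second (in the regime $m \lesssim R$; the complementary regime is absorbed by the final trivial bound). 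This contradicts $D(A^{*} \setminus \{a^{*}\}) - D(A^{*}) \geq \Delta$ from~\eqref{def:Delta}. The same calculation, carried out with $\hat{A}_\delta$ in place of $A^{*}$ and using $\hat{A}_\delta \setminus \{\hat{a}\} \in \A^{k-1}$ (so $D(\hat{A}_\delta \setminus \{\hat{a}\}) \geq D(A^{*}) + \Delta$), shows that if any $\hat{a} \in \hat{A}_\delta$ has $\norm{\hat{a}} > M$, then $D(\hat{A}_\delta) \geq D(A^{*}) + \tfrac{15}{16} \Delta$.

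The conclusion is assembled by applying Lemma~\ref{lem:concentration_Mm} to $\A_{M, m}^{k}$: on a $1 - O(\delta)$ event one has $\sup_{A \in \A_{M, m}^{k}} \abs{\MOM(l_{A}) - \E l_{A}} \leq \epsilon$, where $\epsilon \asymp R\sqrt{\E \norm{X}^{2}} \bigl((\log N)^{2} \sqrt{k/N} + \sqrt{\log(1/\delta)/N}\bigr)$ is exactly the right-hand side of the theorem. If $\hat{A}_\delta \in \A_{M, m}^{k}$, then the MOM optimality $\MOM(l_{\hat{A}_\delta}) \leq \MOM(l_{A^{*}})$ together with this uniform bound applied to both $\hat{A}_\delta$ and $A^{*}$ yields $D(\hat{A}_\delta) - D(A^{*}) \leq 2 \epsilon$, which is the claim. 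Otherwise the preceding paragraph forces $D(\hat{A}_\delta) - D(A^{*}) \geq \tfrac{15}{16} \Delta$: in the informative regime $\epsilon < \tfrac{\Delta}{2}$ this contradicts the uniform concentration and cannot occur, while in the complementary regime $\epsilon \gtrsim \Delta$ the theorem's right-hand side already absorbs the a-priori estimate $D(\hat{A}_\delta) \lesssim \E \norm{X}^{2}$ that follows from $\min_{\hat{a}} \norm{\hat{a}} \leq m$ (Lemma~\ref{lem:optimizer_Mm}).

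\textbf{Main obstacle.} The delicate step is the self-referential localization of $\hat{A}_\delta$: Lemma~\ref{lem:concentration_Mm} only controls the MOM uniformly over quantizers already inside $\A_{M, m}^{k}$, yet the membership $\hat{A}_\delta \in \A_{M, m}^{k}$ is exactly what we wish to establish. The loop closes because centers $\hat{a}$ with $P_{N}(V_{\hat{a}}) = 0$ do not affect $\MOM(l_{\hat{A}_\delta})$ and may be duplicated onto existing centers without loss of generality; any surviving center with $\norm{\hat{a}} > M$ then has positive empirical Voronoi mass and must pay a true $\Delta$-sized distortion penalty by the argument above, which the uniform concentration estimate forbids.
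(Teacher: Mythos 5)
Your localization argument for $\hat{A}_\delta$ has a gap that your "main obstacle" paragraph identifies but does not actually resolve. Suppose some $\hat{a}\in\hat{A}_\delta$ with positive empirical Voronoi mass has $\norm{\hat{a}}>M$; you correctly deduce that the \emph{population} excess $D(\hat{A}_\delta)-D(A^*)\ge\tfrac{15}{16}\Delta$, but you then assert this "contradicts the uniform concentration estimate." It does not: Lemma~\ref{lem:concentration_Mm} controls $\abs{\MOM(l_A)-\E l_A}$ only for $A\in\A_{M,m}^k$, and in this very case $\hat{A}_\delta\notin\A_{M,m}^k$, so there is no bridge from the population penalty to the observable MOM-minimality $\MOM(l_{\hat{A}_\delta})\le\MOM(l_{A^*})$. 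The center-duplication device handles only centers with \emph{empty} empirical cells (those leave $\MOM(l_{\hat{A}_\delta})$ unchanged); deleting a far center with positive mass can only increase $l$ pointwise and hence $\MOM$, and you have no control on that increase.

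The paper closes the loop by a different device: for any $A$ with $\min_{a\in A}\norm{a}\le m$ it proves the pointwise bound $l_{A\cap B_M}(x)-l_A(x)\le 4\norm{x}^2\Ind[\norm{x}>R]$, which propagates to the block quantiles as $\QOM_{1/4}(l_{A\cap B_M})\le\MOM(l_A)+4\,\QOM_{3/4}\bigl(\norm{X}^2\Ind[\norm{X}>R]\bigr)$, and Lemma~\ref{lem:quant_ratio} with~\eqref{eq:R} bounds the overshoot by $\Delta/2$. Thus $\hat{A}_\delta\cap B_M$, which \emph{is} always in $\A_{M,m}^k$, inherits a usable upper bound from the observable $\MOM(l_{\hat{A}_\delta})$ without one ever needing $\hat{A}_\delta\in\A_{M,m}^k$ a priori; this plus uniform concentration at $\alpha\in\{1/4,1/2\}$ (not only $\alpha=1/2$) makes the case analysis close. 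A secondary issue: with $M=2(R+m)$ you only get $\norm{X}>R$ on the far cells, leaving the residual term $2m^2P(\norm{X}>R)$; since $m$ need not be $\lesssim R$ (indeed $R\ge0.99\sqrt{\E\norm{X}^2}$ while $m=4\sqrt{2\E\norm{X}^2}$), the claimed $\Delta/16$ bound fails, and the "final trivial bound" is about the theorem's conclusion, not about establishing $A^*\in\A_{M,m}^k$. The paper's $M=m+2(R\lor m)$ forces $\norm{X}>m$ on the far cells, giving $\norm{X-b}^2\le4\norm{X}^2$ with no extra $m^2$ term.
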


\begin{Remark}
    Observe that both $R$ and $\Delta$ played an import role in the original proof of the strong consistency by \citeauthor{pollard1981strong}.
\end{Remark}

Let us first define our estimator. As before, in this section we use the notation~\eqref{def:Ak}.
\begin{framed}
    \textbf{The estimator of Theorem~\ref{thm:general_rate}.} We set
    \[
    \hat{A}_{\delta} = \argmin_{A \in \A^k} \MOM(l_{A}),
    \]
    with the number of blocks $\ell = 32 \left\lceil \log \tfrac{4}{\delta}\right\rceil + 1$.
\end{framed}
As before, our estimator $\hat{A}_{\delta}$ is an analog of an empirically optimal quantizer \eqref{empquantizer} with the only difference that instead of the sample mean we minimize the MOM criterion. 
Note that the estimator is translation invariant, so we can once again assume that \(\E X = 0\) without loss of generality.

\begin{proof}[Proof of Theorem~\ref{thm:general_rate}]
    The proof is based on the following simple observation: if for $A \in \A^k$ there is $a \in A$ such that \(\norm{a} \gg R\), then by considering \(A^{\prime} = A \setminus \{a\}\) we obtain that \(A' \in \A^{k-1}\) and \(D(A') - D(A) \ll \Delta\), thus $A$ cannot be a (nearly) optimal empirical solution.
    Namely, we are going to compare $\hat{A}_\delta$ with $\hat{A}_\delta \cap B_M$ for some $M \gtrsim R$ and show that with high probability either $\E l_{\hat{A}_\delta \cap B_M}$ is close to $\E l_{\hat{A}_\delta}$ (for small $N$) or \(\hat{A}_\delta \subset B_M\) (for large $N$), where $B_M$ is the ball of radius $M$ centred at the origin. 
    Moreover, \(\min_{a \in \hat{A}_\delta} \norm{a} \lesssim \sqrt{\E \norm{X}^2}\) with high probability,
    thus in both cases we can apply Lemma~\ref{lem:concentration_Mm} to obtain the convergence rate of the form
    \[
        D(\hat{A}_\delta) - D(A^*) 
        \lesssim M \sqrt{\E \norm{X}^2} \left((\log N)^2 \sqrt{\frac{k}{N}} + \sqrt{\frac{\log\frac{1}{\delta}}{N}}\right).
    \]
    First, according to the first part of Lemma~\ref{lem:optimizer_Mm}, with probability at least \(1 - e^{- \ell / 12} \ge 1 - \delta / 4\),
    \[
    \min_{a \in \hat{A}_\delta} \norm{a} \le m = 4 \sqrt{2 \E \norm{X}^2}.
    \]
    Let us define $M = m + 2 (R \lor m)$. Note that 
    \[
    \E \norm{X}^2 \le R^2 + \E \norm{X}^2 \Ind[\norm{X} > R] \le R^2 + \frac{\Delta}{64} \le R^2 + \frac{\E \norm{X}^2}{64},
    \]
    thus $R \ge 0.99 \sqrt{\E \norm{X}^2}$, which implies $M \simeq R$.

    Now fix $A \in \A^k$ such that \(\min_{a \in A} \norm{a} \le m\). Then by~\eqref{eq:norm_x} for any $a \in A$ (if it exists) such that \(\norm{a} > M\) and any $x \in V_a$, one has \(\norm{x} > \frac{M - m}{2} = R \lor m\), thus \(l_A \equiv l_{A \cap B_M}\) on $B_{R \lor m}$. Moreover, recall that for all $x \in E$,
    \[
        \min_{a \in A} \norm{a - x} \le m + \norm{x}, \quad
        \min_{a \in A \cap B_M} \norm{a - x} \le m + \norm{x},
    \]
    and thus
    \begin{align*}
        l_{A \cap B_M}(x) - l_A(x) 
        &= \min_{a \in A \cap B_M} \norm{a - x}^2 - \min_{a \in A} \norm{a - x}^2
        \\
        &\le (m + \norm{x})^2 \Ind[\norm{x} > R \lor m] 
        \le 4 \norm{x}^2 \Ind[\norm{x} > R].
    \end{align*}
    To simplify the notation by writing $\MOM(f(X))$ and $\QOM_{\alpha}(f(X))$ we mean $\MOM(f)$ and $\QOM_{\alpha}(f)$ respectively. Therefore, we have
    \begin{align*}
        \MOM(l_A) &\ge \MOM\left(l_{A \cap B_M}(X) - 4 \norm{X}^2 \Ind[\norm{X} > R]\right) \\
        &\ge \QOM_{1/4}(l_{A \cap B_M}) - 4 \QOM_{3/4}\left(\norm{X}^2 \Ind[\norm{X} > R]\right).
    \end{align*}
    The last term can be bounded by Lemma~\ref{lem:quant_ratio}: with probability at least \(1 - \delta / 4\),
    \[
        \QOM_{3/4} \left(\norm{X}^2 \Ind[\norm{X} > R]\right) 
        \le 8 \E \norm{X}^2 \Ind[\norm{X} > R] 
        \le \frac{\Delta}{8},
    \]
    thus
    \begin{equation}\label{eq:QOM_AM_bound}
        \QOM_{1/4}(l_{A \cap B_M}) \le \MOM(l_A) + \frac{\Delta}{2}.
    \end{equation}
    Further, we can assume without loss of generality\ that $A^*$ belongs to $\A^k_{M, m}$. Indeed, \(\min_{a \in A^*} \norm{a} \le m\) according to the proof of Theorem~\ref{thm:pmin}, and if \(A^* \not\subset B_M\), then \(|A^* \cap B_M| < k\) and hence
    \[
        \E l_{A^*} + \Delta \le \E l_{A^* \cap B_M} 
        \le \E l_{A^*} + 4 \E \norm{X}^2 \Ind[\norm{X} > R] 
        \le \E l_{A^*} + \frac{\Delta}{16},
    \]
    which is possible only if $\Delta = 0$. But in this case \(\norm{X} \le R \le M\) almost surely and \(|\supp(P)| \le k - 1\), thus one can choose \(A^* = \supp(P) \in \A^k_{M, m}\). 
    Lemma~\ref{lem:concentration_Mm} ensures that, with probability at least $1 - \delta / 2$, for any $\alpha \in \left\{\frac{1}{4}, \frac{1}{2}\right\}$ it holds that
    \begin{equation}\label{eq:l_concentration}
        \sup_{A \in \A_{M, m}^k} \abs{\E l_{A}(X) - \QOM_{\alpha}(l_{A})} 
        \le C R \sqrt{\E \norm{X}^2} \left((\log N)^2 \sqrt{\frac{k}{N}} + \sqrt{\frac{\log \frac{1}{\delta}}{N}}\right),
    \end{equation}
    where $C > 0$ is an absolute constant.
    Finally, we get the following lines of inequalities, which hold with probability at least $1 - \delta$,
    \begin{align*}
        \E l_{\hat{A}_\delta \cap B_M} 
        &\le \QOM_{1/4}(l_{\hat{A}_\delta \cap B_M}) + C R \sqrt{\E \norm{X}^2} \left((\log N)^2 \sqrt{\frac{k}{N}} + \sqrt{\frac{\log \frac{1}{\delta}}{N}}\right) \\
        &\le \MOM(l_{\hat{A}_\delta}) + \frac{\Delta}{2} + C R \sqrt{\E \norm{X}^2} \left((\log N)^2 \sqrt{\frac{k}{N}} + \sqrt{\frac{\log \frac{1}{\delta}}{N}}\right) \\
        &\le \MOM(l_{A^*}) + \frac{\Delta}{2} + C R \sqrt{\E \norm{X}^2} \left((\log N)^2 \sqrt{\frac{k}{N}} + \sqrt{\frac{\log \frac{1}{\delta}}{N}}\right) \\
        &\le \E l_{A^*} + \frac{\Delta}{2} + 2 C R \sqrt{\E \norm{X}^2} \left((\log N)^2 \sqrt{\frac{k}{N}} + \sqrt{\frac{\log \frac{1}{\delta}}{N}}\right).
    \end{align*}
    Now there are two possible cases.
    If 
    \[
    C R \sqrt{\E \norm{X}^2} \left((\log N)^2 \sqrt{\frac{k}{N}} + \sqrt{\frac{\log \frac{1}{\delta}}{N}}\right) \ge \frac{\Delta}{4},
    \]
    then 
    \[
    \E l_{\hat{A}_\delta} \le \E l_{\hat{A}_\delta \cap B_M} 
    \le \E l_{A^*} + 4 C R \sqrt{\E \norm{X}^2} \left((\log N)^2 \sqrt{\frac{k}{N}} + \sqrt{\frac{\log \frac{1}{\delta}}{N}}\right).
    \]
    Otherwise, we have
    \[
    C R \sqrt{\E \norm{X}^2} \left((\log N)^2 \sqrt{\frac{k}{N}} + \sqrt{\frac{\log \frac{1}{\delta}}{N}}\right) < \frac{\Delta}{4},
    \]
    then \(\hat{A}_\delta \subset B_M\): indeed, \(\E l_{\hat{A}_\delta \cap B_M} < \E l_{A^*} + \Delta\), now assume \(|\hat{A}_\delta \cap B_M| < k\), then
    \[
        \E l_{\hat{A}_\delta \cap B_M} 
        \ge \inf_{A \in \A^{k-1}} \E l_A 
        = \E l_{A^*} + \Delta,
    \]
    and we obtain a contradiction.
    Thus, \(\hat{A}_\delta \in \A^k_{M, m}\), and~\eqref{eq:l_concentration} again yields
    \begin{align*}
        D(\hat{A}_\delta) - D(A^*) & = \E l_{\hat{A}_\delta} - \E l_{A^*} \\ 
        & \le \E l_{\hat{A}_\delta} - \MOM(l_{\hat{A}_\delta}) - \E l_{A^*} + \MOM(l_{A^*}) \\ 
        & \lesssim R \sqrt{\E \norm{X}^2} \left((\log N)^2 \sqrt{\frac{k}{N}} + \sqrt{\frac{\log \frac{1}{\delta}}{N}}\right) \, .
    \end{align*}
\end{proof}

We conclude this section by comparing Theorem~\ref{thm:general_rate} to Theorem~2.2 presented in \citep*{biau2008performance}. The form of the latter result is somewhat similar to our excess distortion bound. However, the proof of Theorem~2.2 contains an inaccuracy which, to the best of our understanding, cannot be immediately fixed. The problem in the proof comes from the application of Corollary~2.1 in \citep{biau2008performance} which requires that the centres belong to the set $\A_{M}^k$ (which is called $\F_M^k$ there) and also that the observations $X_1, \ldots, X_N$ are in a bounded domain with probability one. The last fact does not hold for the unbounded distributions considered there (recall our Remark~\ref{importantremark}). Fortunately, with additional technical efforts and by replacing the empirically optimal quantizer with our MOM minimizer, we achieve the result even stronger in a manner than one claimed in Theorem~2.2 by \citeauthor{biau2008performance}

\section{Discussion}
\label{Discussions}

Finally, we discuss several previous results related to clustering for heavy-tailed distributions as well as directions for future work. 

The results of \cite*{brownlees2015empirical} are only presented for $k$-medians (where the distortion is defined as $D(A) = \E \min\limits_{a \in A} \norm{X - a}$ instead of $D(A) = \E \min\limits_{a \in A} \norm{X - a}^2$). However, we believe that their techniques, at least if applied straightforwardly, would require $\E \norm{X}^4 < \infty$. Similarly, \citeauthor{brecheteau2018robust} require $\E \norm{X}^2 < \infty $, but the targeted quantizer is different from ours. 
Our Theorem~\ref{thm:simpleupperm} only requires $\E \norm{X}^2 < \infty$ and is valid for any separable Hilbert space, whereas Theorem~11 in \citep{brownlees2015empirical} depends explicitly on the dimension and has a worse dependence on the $\log \frac{1}{\delta}$-term. 
The uniform bounds in \citep{telgarsky2013moment, bachem2017uniform} provide uniform convergence bounds under $\E \norm{X}^4 < \infty$ in $\R^d$ that cannot be immediately converted into the excess distortion bounds similar to ours. 
Since these uniform bounds are tuned to the analysis of empirically optimal quantizers, they obviously have a suboptimal dependence on the confidence parameter $\delta$.

A natural course of further research is to introduce some favorable assumptions on the distribution $P$ leading to the so-called \emph{fast rates} for the excess distortion. These are the excess distortion bounds scaling as $O\left(\frac{1}{N}\right)$ instead of $O\left(\frac{1}{\sqrt{N}}\right)$ which, of course, cannot be obtained for free \citep{antos2005improved}. By now, these assumptions and their analysis are well-understood in the bounded case (see \citep{levrard2015nonasymptotic} and references therein). 
Another interesting direction is to sharpen our bounds and make our robust algorithms more practical. As already mentioned, we believe that making some assumptions on $\pmin$ and thus restricting the sizes of clusters is somewhat more natural than assuming that $M$ is known in advance. Finally, it is natural to further extend our main results to the situation where the adversarial corruption of the observations is allowed. We refer to \citep{minsker2018uniform} for the related techniques.

\subsection*{Acknowledgements}
We would like to thank Olivier Bachem for stimulating discussions, G\'{a}bor Lugosi for a valuable feedback and Marco Cuturi and Nikita Puchkin for providing several important references. We are also thankful to the three anonymous referees for their useful comments and suggestions.

The work of Alexey Kroshnin was conducted within the framework of the HSE University Basic Research Program.
Results of Section 4 have been obtained under support of the RSF grant No. 19-71-30020.

\bibliography{mybib}

\section*{Appendix}

\subsection*{Proof of Lemma~\texorpdfstring{\ref{lem:momuniform}}{2.3}}
First, notice that \( \E f - \QOM_{\alpha}(f) = \QOM_{1 - \alpha}(\E f - f) \). Therefore, \( \sup_{f \in \F}  (\E f - \QOM_{\alpha}(f)) > x \) is equivalent to
\[
    \sup_{f \in \F} \frac{1}{\ell} \sum_{t = 1}^{\ell} \Ind[\E f - \tilde{f}_t > x] \geq \alpha,
\]
where \( \tilde{f}_t = \frac{\ell}{N} \sum_{i \in I_t} f(X_i) \). Using the idea of \cite{Mendelson:2015:LWC:2799630.2699439}, denote the function \( \phi(u) = (u - 1) \Ind[1 \le u \le 2] + \Ind[u \geq 2] \), so that \( \phi \) is $1$-Lipschitz, and \( \phi(u) \geq \Ind[u \geq 2] \). Then, the above event is included in the following event
\[
    \sup_{f \in \F} \frac{1}{\ell} \sum_{t = 1}^{\ell} \phi\left(\frac{2(\E f - \tilde{f}_t)}{x}\right) \geq \alpha \, .
\]
Next, we write the bounded difference inequality (see \citep{boucheron2013concentration}) since the summands in the above are independent and bounded by one. We have that, with probability at least \( 1 - e^{-2 \ell y^{2}}\),
\begin{align*}
    \sup_{f \in \F} \frac{1}{\ell} \sum_{t = 1}^{\ell} \phi\left(\frac{2(\E f - \tilde{f}_t)}{x}\right) \leq & \sup_{f \in \F} \frac{1}{\ell} \sum_{t = 1}^{\ell} \E\phi\left(\frac{2(\E f - \tilde{f}_t)}{x}\right) \\
    & + \E \sup_{f \in \F} \frac{1}{\ell} \sum_{t = 1}^{\ell} \left\{\phi\left(\frac{2(\E f - \tilde{f}_t)}{x}\right) - \E\phi\left(\frac{2(\E f - \tilde{f}_t)}{x}\right) \right\} \\
    & + y .
\end{align*}
For the first part, since \( \phi(u) \leq \Ind[u \geq 1] \) and using Chebyshev's inequality, we write
\begin{align*}
    \sup_{f \in \F} \frac{1}{\ell} \sum_{t = 1}^{\ell} \E \phi\left(\frac{2 (\E f - \tilde{f}_t)}{x}\right) 
    &\leq \sup_{f \in \F} \frac{1}{\ell} \sum_{t = 1}^{\ell} \P\left( \E f - \tilde{f}_t \geq x / 2\right)
    \\
    &\leq \sup_{f \in \F} \frac{\Var(\tilde{f}_t)}{(x/2)^{2}}
    = \sup_{f \in \F} \Var(f) \frac{4 \ell}{N x^2} \,.
\end{align*}
For the second part we use the symmetrization and contraction arguments of \cite{ledoux2013probability}, so that together
\begin{align*}
    &\E \sup_{f \in \F} \frac{1}{\ell} \sum_{t = 1}^{\ell} \left\{\phi\left(\frac{2(\E f - \tilde{f}_t)}{x}\right) - \E\phi\left(\frac{2(\E f - \tilde{f}_t)}{x}\right) \right\}
    \\
    &\quad\quad\leq
    2 \E \sup_{f \in \F} \frac{1}{\ell} \sum_{t = 1}^{\ell} \eps_{t} \phi\left(\frac{2(\E f - \tilde{f}_t)}{x}\right) 
    \\
    &\quad\quad\leq
    2 \E \sup_{f \in \F} \frac{1}{\ell} \sum_{t = 1}^{\ell} \eps_{t} \frac{2(\E f - \tilde{f}_t)}{x},
\end{align*}
where \( \eps_1, \dots, \eps_t \) are i.i.d.\ Rademacher signs.
Using the symmetrization argument again, we have
\[
    \E \sup_{f \in \F} \frac{1}{\ell} \sum_{t = 1}^{\ell} \eps_{t} \frac{2(\E f - \tilde{f}_t)}{x} \leq \frac{4}{x} \E \sup_{f \in \F} \frac{1}{N} \sum_{i = 1}^{N} \eps_{i} f(X_i) \, .
\]
Collecting the three terms together we have that, with probability at least \( 1 - e^{-2 \ell y^{2}}\),
\[
    \sup_{f \in \F} \frac{1}{\ell} \sum_{t = 1}^{\ell} \Ind\left[ \E f - \tilde{f}_t > x\right] 
    \leq y + \frac{8}{x} \E \sup_{f \in \F} \frac{1}{N} \sum_{i = 1}^{N} \eps_{i} f(X_i) + \frac{4}{x^2} \sup_{f \in \F} \Var(f) \frac{\ell}{N} .
\]
We need the right-hand side of the last display to be smaller than \( \alpha \). Let us take \( y = \alpha / 2 \) and
\[
    x = \frac{16}{\alpha} \E \sup_{f \in \F} \frac{1}{N} \sum_{i = 1}^{N} \eps_{i} f(X_i) + \sqrt{\frac{8}{\alpha} \sup_{f \in \F} \Var(f) \frac{\ell}{N}} .
\]
Then, with probability at least \( 1 - e^{-\alpha^{2} \ell / 2} \), it holds that
\[
    \sup_{f \in \F} \frac{1}{\ell} \sum_{t = 1}^{\ell} \Ind\left[ \E f - \tilde{f}_t > x\right] \leq \alpha,
\]
where we used that \(\frac{a}{a + b} + \frac{b^2}{(a + b)^2} \le 1\) for all $a, b > 0$.
To derive the other tail, we can use the symmetry \( \QOM_{\alpha}(f) = - \QOM_{1 - \alpha}(-f)\).
\qed

\subsection*{Proof of Lemma~\texorpdfstring{\ref{lem:quant_ratio}}{3.4}}
    By Markov's inequality and Chernoff's bound for the binomial distribution, we have
    \begin{align*}
        \P\left(\mathrm{Quant}_{1 - \alpha}(\xi_1, \dots, \xi_\ell) \ge 2 \E \xi / \alpha\right)
        &= \P\left(\sum_{i=1}^{\ell} \Ind[\xi_i \ge 2 \E \xi / \alpha] \ge \alpha \ell\right) 
        \\
        &\le \exp\left(- \frac{1}{3} \left(\alpha - \frac{\alpha}{2}\right) \ell\right)
        = \exp\left(- \frac{\alpha \ell}{6}\right).
    \end{align*}
\qed

\subsection*{Proof of Lemma~\texorpdfstring{\ref{lem:cov_number}}{3.6}}
    \textbf{Step 1.} 
    We start with $E = \R^d$.
    In order to prove the bound we observe that $L_{2}(P_N)$ distance between $l_A$, \(A = (a_1, \dots, a_k) \in \A^k_{M, m}\), and $l_B$, \(B = (b_1, \dots, b_k) \in \A^k_{M, m}\) (we can multiply some points if $|A| < k$ or $|B| < k$), is controlled by the maximum of the Euclidean distances between the corresponding vectors $a_j$ and $b_j$.
    Indeed, let $x \in V_{a_j} \cap V_{b_s}$, then the following assertions hold:
    \begin{align*}
        l_B(x) - l_A(x) &\le \norm{b_j}^2 - \norm{a_j}^2 - 2 \langle x, b_j - a_j \rangle \le 2 \left(\norm{a_j} + \norm{x}\right) \norm{a_j - b_j} + \norm{a_j - b_j}^2, \\
        l_A(x) - l_B(x) &\le \norm{a_s}^2 - \norm{b_s}^2 - 2 \langle x, a_s - b_s \rangle \le 2 \left(\norm{b_s} + \norm{x}\right) \norm{a_s - b_s} + \norm{a_s - b_s}^2.
    \end{align*}
    Therefore, 
    \[
    \abs{l_A(x) - l_B(x)} \lesssim \left(\norm{a_j} + \norm{b_s} + \norm{x}\right) \max_r \norm{a_r - b_r} + \max_r \norm{a_r - b_r}^2.
    \]
    On the other hand, 
    \[
    \norm{a_j - x} = \min_{a \in A} \norm{a - x} \le \norm{x} + \min_{a \in A} \norm{a} \le \norm{x} + m,
    \]
    as well as \(\norm{b_s - x} \le \norm{x} + m\), thus
    \[
    \abs{l_A(x) - l_B(x)} = \abs{\norm{a_j - x}^2 - \norm{b_s - x}^2} \le \left(\norm{x} + m\right)^2.
    \]
    Combining the above bounds and using the inequality \(u^2 \wedge v^2 \le u v\) for $u, v \ge 0$, we conclude that
    \begin{align*}
        \abs{l_A(x) - l_B(x)} 
        &\lesssim \left(\norm{a_j} + \norm{b_s} + \norm{x}\right) \max_r \norm{a_r - b_r} + (\norm{x} + m)^2 \land \max_r \norm{a_r - b_r}^2 \\
        &\lesssim \left(\norm{a_j} + \norm{b_s} + \norm{x}\right) \max_r \norm{a_r - b_r} + (\norm{x} + m) \max_r \norm{a_r - b_r} \\
        &\lesssim \left(\norm{a_j} + \norm{b_s} + \norm{x} + m\right) \max_r \norm{a_r - b_r}.
    \end{align*}
    Note that~\eqref{eq:A_radius} applied to the empirical measure $P_N$ ensures
    \begin{equation}\label{eq:sigma_N}
        \sum_{a \in A} \norm{a}^2 P_N(V_a) \le \sigma_N^2, \quad 
        \sum_{b \in B} \norm{b}^2 P_N(V_b) \le \sigma_N^2, \quad
        \text{where}\quad \sigma_N^2 = 2 m^2 + 8 P_N \norm{X}^2.
    \end{equation}
    Therefore, we have
    \begin{align*}
        &\norm{l_A - l_B}_{L_2(P_N)} 
        \\
        &\lesssim \left(\sqrt{\sum_{a \in A} \norm{a}^2 P_N(V_a)} + \sqrt{\sum_{b \in B} \norm{b}^2 P_N(V_b)} + \sqrt{\frac{1}{N} \sum_i \norm{X_i}^2} + m\right) \max_r \norm{a_r - b_r} \\
        &\le \left(2 \sigma_N + m + \sqrt{P_N \norm{X}^2}\right) \max_r \norm{a_r - b_r} \lesssim \sigma_N \max_r \norm{a_r - b_r}.
    \end{align*}
    Finally, we use that in $(\R^d)^k$ it holds that 
    \[
    \log \mathcal{N}_\infty\left((B_M)^k, t\right) \le k \log \mathcal{N}\left({B}_M, t\right) \lesssim k d \log \frac{M}{t}, 
    \]
    (see, e.g., \citep{Vershynin2016HDP}).
    
    \textbf{Step 2.} 
    Now we are ready to prove the bound in its full generality.
    The idea is to show that by the Johnson--Lindenstrauss lemma, there exists a low-dimensional subspace $L$ such that for a significant fraction of sets $A$ under consideration the functions $l_A$ remain almost the same if one projects both \(X_1, \dots, X_N\) and \(a_1, \dots, a_k \in A\) on $L$; then we apply the result of \textbf{Step~1}.
    
    First, note that \eqref{eq:var_Mm} implies (which holds for $P_N$ as well) 
    \[
    \norm{l_A - l_{\{0\}}}_{L_2(P_N)} \le 2 M \sqrt{6 P_N \norm{X}^2 + m^2} \le 2 M \sigma_N \quad\text{for all}\quad A \in \A^k_{M, m},
    \]
    where $\sigma_N$ comes from~\eqref{eq:sigma_N}, so it is enough to consider \(t \le 2 M \sigma_N\).
    
    We are going to apply the Johnson--Lindenstrauss lemma, and to do this, we first show that it is enough to consider quantizers $A$ from some finite-dimensional subspace of $E$, depending on the sample $X_1, \dots, X_N$.
    Let us fix an arbitrary vector \(u \notin \mathrm{Span}\left(\{X_1, \dots, X_N\}\right)\). It is easy to see that for any $a \in E$ there exists \(\tilde{a} \in S = \mathrm{Span}(\{u, X_1, \dots, X_N\})\) such that \(\norm{\tilde{a}} = \norm{a}\) and \(\langle \tilde{a}, X_i\rangle = \langle a, X_i\rangle\) for all $1 \le i \le N$. Therefore, without loss of generality, one can restrict $\A^k_{M, m}$ to the sets from the $(N+1)$-dimensional subspace $S$.
    Using the last observation, by the version of the Johnson--Lindenstrauss lemma for products (p.998 in \citep*{fefferman2016testing}) for any $0 < \eps \le 1/2$ and any fixed set $Q \subset S$ with \(|Q| \le N + k\), it holds that 
    \[
        \P_L\left(E^Q_L\right) 
        = \P_L\bigl(\abs{\langle R_L x, R_L y\rangle - \langle x, y\rangle} \le \eps \norm{x} \cdot \norm{y} ~~\text{for all}~~ x, y \in Q\bigr) 
        \ge \frac{1}{2}.
    \]
    Here $L$ is a random uniformly distributed $d$-dimensional subspace of $S$ with \(d = \left\lceil\frac{c \log(N + k)}{\eps^2}\right\rceil\) (we refer to \citep{johnson1984extensions} for a rigorous mathematical definition), and \(R_L = \sqrt{\frac{N + k}{d}} \Pi_L\), where $\Pi_L$ is the orthogonal projector on $L$.
    Let $\mathcal{P}(t) \subset \A^k_{M, m}$ be such that $\left\{l_A \;:\; A \in \mathcal{P}(t)\right\}$ is a $t$-packing set of $\F^k_{M, m}$ (i.e.,\ a maximal $t$-separated set), so that by the standard relation \(\mathcal{N}_2(\F^k_{M, m}, t, P_N) \le |\mathcal{P}(t)|\). 
    Now notice that for $A$ chosen uniformly at random from $\mathcal{P}(t)$ and random $L$ with joint probability at least $\frac{1}{2}$ the above condition holds for the set \(Q_A = \{X_1, \dots, X_N\} \cup A\):
    \[
    \P\left(E^{Q_A}_L\right) = \E_A \P_L\left(E^{Q_A}_L\right) \ge \frac{1}{2}.
    \]
    Note that we consider the union of the sample and a quantizer since we have to bound both the norms of the projections and the products of the form $\langle R_L X_i, R_L a\rangle$.
    On the other hand, by Fubini's theorem
    \[
    \P\left(E^{Q_A}_L\right) = \E_L \P_A\left(E^{Q_A}_L\right) \ge \frac{1}{2},
    \]
    therefore, there exists a subspace $L$ such that $\P_A\left(E^{Q_A}_L\right) \ge \frac{1}{2}$, that is, the event $E^{Q_A}_L$ holds for at least half of quantizers from $\mathcal{P}(t)$~--- let us denote this set by $\mathcal{P}_L(t)$.
    
    Consider an arbitrary quantizer $A \in \mathcal{P}_L(t)$. 
    In what follows we use for brevity the following simple notation: \(x' = R_L x\) for any $x \in E$ and, respectively, 
    \(A' = \left\{a' \;:\; a \in A\right\}\) and \(P_N' = \frac{1}{N} \sum_i \delta_{X_i'}\).
    Clearly, by the Johnson--Lindenstrauss lemma for any $x \in Q_A$,
    \[
        \frac{1}{2} \norm{x}^2 \le (1 - \eps) \norm{x}^2 
        \le \norm{x'}^2 = \langle R_L x, R_L x\rangle 
        \le (1 + \eps) \norm{x}^2 \le \frac{3}{2} \norm{x}^2,
    \]
    thus
    \[
        \max_{a' \in A'} \norm{a'}^2 \le \frac{3}{2} M^2, \quad 
        \min_{a' \in A'} \norm{a'}^2 \le \frac{3}{2} m^2, \quad
        P_N \norm{X'}^2 \le \frac{3}{2} P_N \norm{X}^2.
    \]
    In particular, this implies that 
    \[
    A' \in \A^k_{3M/2, 3m/2} \quad\text{and}\quad 
    \sum_{a' \in A'} \norm{a'}^2 P_N(V_{a'}) \le \frac{3}{2} \sigma_N^2,
    \]
    with $\sigma_N$ defined by~\eqref{eq:sigma_N}.
    Now for any \(X_i \in V_a(A)\), where $V_a(A)$ is the Voronoi cell from the partition induced by the set $A$, corresponding to the point $a$, we have
    \begin{align*}
        l_{A'}(X_i') \le \norm{a'}^2 - 2 \langle X_i', a'\rangle 
        &\le \norm{a}^2 - 2 \langle X_i, a\rangle + \eps \left(\norm{a}^2 + 2 \norm{X_i} \cdot \norm{a}\right)
        \\
        &\le l_A(X_i) + \eps M \left(\norm{a} + 2 \norm{X_i}\right),
    \end{align*}
    and in the same way we obtain that for \(X_i' \in V_{a'}(A')\),
    \begin{align*}
        l_A(X_i) &\le l_{A'}(X_i') + \eps \left(\norm{a}^2 + 2 \norm{X_i} \cdot \norm{a}\right)
        \\
        &\le l_{A'}(X_i') + \eps M \left(\norm{a} + 2 \norm{X_i}\right)
        \le l_{A'}(X_i') + \eps M \left(\sqrt{2} \norm{a'} + 2 \norm{X_i}\right).
    \end{align*}
    Therefore, recalling the definition of $\sigma_N$ \eqref{eq:sigma_N}, we have
    \[
    \norm{l_A(X) - l_{A'}(X')}_{L_2(P_N)} \le \eps M \left(2 \sigma_N + 2 \sqrt{P_N \norm{X}^2}\right) \le 3 \eps M \sigma_N.
    \]
    Setting \(\eps = \frac{t}{12 M \sigma_N} \le \frac{1}{6}\) we get \(\norm{l_A(X) - l_{A'}(X')}_{L_2(P_N)} \le \frac{t}{4}\), thus 
    \[
    \norm{l_{A'}(X) - l_{B'}(X)}_{L_2(P_N')} \ge \norm{l_A(X) - l_{B}(X)}_{L_2(P_N)} - \frac{t}{2} > \frac{t}{2} \quad\text{for any}\quad A \neq B \in \mathcal{P}_L(t). 
    \]
    This implies by the standard relation between the covering and packing numbers that \(|\mathcal{P}_L(t)| \le \mathcal{N}_2\left(\F', t/4, P_N'\right)\), where \(\F' = \left\{l_{A'} : A \in \A^k_{M, m}\right\}\).
    As was shown above \(\F' \subset \F^k_{3M/2, 3m/2}\), and since the corresponding quantizers belong to the $d$-dimensional subspace $L$, we have by \textbf{Step~1} that
    \[
        \log \mathcal{N}_{2}\left(\F', t/4, P'_N\right) 
        \lesssim k d \log \frac{M \sigma_N}{t}
        \lesssim \frac{k M^2 \sigma_N^2 \log(N + k)}{t^2} \log \frac{M \sigma_N}{t}.
    \]
    Combining our bounds, we conclude that
    \begin{align*}
        \log \mathcal{N}_2(\F^k_{M, m}, t, P_N) 
        &\le \log |\mathcal{P}(t)| 
        \le \log\left(2 |\mathcal{P}_L(t)|\right)
        \\
        &\lesssim \log \mathcal{N}_2\left(\F', t/4, P_N'\right) 
        \lesssim \frac{k M^2 \sigma_N^2 \log(N + k)}{t^2} \log \frac{M \sigma_N}{t}.
    \end{align*}
    To obtain the claimed bound, it remains to notice that for \(N < k\), it is enough to consider the class $\A^N_{M, m}$ instead of $\A^k_{M, m}$, thus we can always assume \(k \le N\). Hence,
    \[
        \log \mathcal{N}_2(\F^k_{M, m}, t, P_N) 
        \lesssim \frac{k M^2 \sigma_N^2 \log(2 N)}{t^2} \log \frac{M \sigma_N}{t}.
    \]
    The claim follows.
\qed
\end{document}